\documentclass[11pt]{amsart}
\usepackage{amsmath, amscd, amssymb}
\usepackage[enableskew,vcentermath]{youngtab}
\usepackage{ytableau}
\usepackage[mathscr]{eucal}
\usepackage[normalem]{ulem} 
\usepackage{soul} 
\usepackage[all,cmtip]{xy}
\usepackage{tikz-cd}
\usepackage{color} 
\setulcolor{red}
\sethlcolor{yellow}
\setstcolor{blue}
\usepackage{xfrac} 

\usepackage{geometry}
\usepackage{amsfonts}
\usepackage[colorlinks,linkcolor=blue,citecolor=blue, pdfstartview=FitH]{hyperref}
\usepackage{graphicx}
\usepackage{cite}

\numberwithin{equation}{section}

\theoremstyle{definition}
\newtheorem{thm}{Theorem}[section]
\newtheorem{theorem}[thm]{Theorem}

\newtheorem{lemma}[thm]{Lemma}
\newtheorem{corollary}[thm]{Corollary}
\newtheorem{proposition}[thm]{Proposition}

\newtheorem{remark}[thm]{Remark}

\newtheorem{definition}[thm]{Definition}

\newtheorem{example}[thm]{Example}

\newtheorem{defn-thm}[thm]{Definition-Theorem}

\newenvironment{observe}{\noindent\textcolor{blue}{\textit{Observation}}.}{\hfill \textcolor{blue}{$\blacktriangleleft$}\par}
\newtheorem*{theorem*}{Theorem}
\newtheorem*{proposition*}{Proposition}

\usepackage{todonotes}
\definecolor{pistachio}{rgb}{0.58, 0.77, 0.45}
\definecolor{eggshell}{rgb}{0.94, 0.92, 0.84}

\newcommand{\cN}{{\mathcal N}}

\newcommand{\cP}{{\mathcal P}}

\newcommand{\g}{{\mathfrak g}}

\newcommand{\gp}{\mathfrak{p}}

\newcommand{\CC}{{\mathbb C}}

\newcommand{\Sp}{\operatorname{Sp}}
\newcommand{\SO}{\operatorname{SO}}

\newcommand{\id}{{\operatorname{id}}}

\newcommand{\btheorem}{\begin{theorem}}
	\newcommand{\etheorem}{\end{theorem}}
\newcommand{\bproposition}{\begin{proposition}}
	\newcommand{\eproposition}{\end{proposition}}
\newcommand{\bdefinition}{\begin{definition}}
	\newcommand{\edefinition}{\end{definition}}
\newcommand{\bcorollary}{\begin{corollary}}
	\newcommand{\ecorollary}{\end{corollary}}
\newcommand{\bproof}{\begin{proof}}
	\newcommand{\eproof}{\end{proof}}
\newcommand{\bremark}{\begin{remark}}
	\newcommand{\eremark}{\end{remark}}
\newcommand{\eexample}{\end{example}}
\newcommand{\bexample}{\begin{example}}
\newcommand{\elemma}{\end{lemma}}
\newcommand{\blemma}{\begin{lemma}}
\newcommand{\bobserve}{\begin{observe}}
	\newcommand{\eobserve}{\end{observe}}

\renewcommand{\bar}{\overline}

\renewcommand{\phi}{\varphi}

\newcommand{\ee}{\end{eqnarray*}}
\newcommand{\be}{\begin{eqnarray*}}

\newcommand{\beq}{\begin{equation}}
	\newcommand{\eeq}{\end{equation}}

\newcommand{\bd}{\begin{enumerate}}
	\newcommand{\ed}{\end{enumerate}}
\newcommand{\bti}{\begin{tikzcd}}
	\newcommand{\eti}{\end{tikzcd}}

\renewcommand{\tilde}{\widetilde}

\renewcommand{\sf}[1]{\textsf{#1}}
\renewcommand{\bf}[1]{\mathbf{#1}}

\title[Spaltenstein Varieties Associated with Pseudo-Polarizations]{Spaltenstein Varieties Associated with Pseudo-Polarizations}

\author{Xueqing Wen}
\address{Mathematical Science Research Center, Chongqing University of Technology, No.69, Hongguang Avenue, Banan District, Chongqing, 400054, China}
\email{wenxq@cqut.edu.cn}

\author{Yaoxiong Wen}
\address{School of Mathematical Science, Zhejiang University, 866 Yuhangtang Rd, Hangzhou, 310058, Zhejiang, China}
\email{y.x.wen.math@gmail.com}
\date{}

\begin{document}

\begin{abstract}
    We introduce minimal Richardson orbits and pseudo-polarizations for nilpotent orbits in classical Lie algebras of types~B,~C, and~D. For any nilpotent orbit, we classify all minimal Richardson orbits containing it and thereby determine the associated pseudo-polarizations. We prove that the corresponding Spaltenstein varieties are smooth and pure dimensional, with iterated orthogonal/isotropic Grassmannian fibrations. As an application, we extend the seesaw property and duality of Fu--Ruan--Wen from Richardson orbits to all special orbits in types~B and~C.
\end{abstract}

\maketitle

\section{Introduction}

Let $G$ be a reductive group with Lie algebra $\g$, and let $\cN \subset \g$ denote the nilpotent cone.  
For a nilpotent element $e \in \g$, its adjoint orbit $G \cdot e =: \bf{O}_e$ is called a \emph{nilpotent orbit}.

A nilpotent orbit $\bf{O}_R$ is called \emph{Richardson} if there exists a parabolic subgroup $P \subset G$ such that the image of the moment map (or \emph{generalized Springer map})
\begin{align*}
    \mu_P: T^*(G/P) \longrightarrow \g
\end{align*}
coincides with the closure $\overline{\bf{O}}_R$. In this case, $P$ is called a \emph{polarization} of $\bf{O}_R$. A Richardson orbit may admit several distinct polarizations. In general, $\mu_P$ is generically finite; if it is birational, then it is crepant. Moreover, any crepant resolution of the normalization of a nilpotent orbit closure arises in this way \cite{Fu03}. In type~A, every nilpotent orbit is Richardson, and the corresponding generalized Springer maps $\mu_P$ are always crepant.

In \cite{FRW24}, Fu, Ruan, and the second-named author studied Richardson orbits in types~B and~C and discovered a striking link between Springer duality and Langlands duality.  
More precisely, if $\bf{O}_{B,R}$ and $\bf{O}_{C,R}$ are Springer-dual Richardson orbits with dual polarizations $(P_B,P_C)$ (i.e.\ the Levi factors of $P_B$ and $P_C$ are Langlands dual), then one has the diagram
\begin{align}
\begin{split}\label{intro.diagram} 
\xymatrix{ 
 T^*( \SO_{2n+1}/P_B ) \ar[rd]  \ar[dd]_{\mu_{P_B}} 
 & \stackrel{\text{Langlands dual}}{\leftrightsquigarrow} 
 & T^*( \Sp_{2n} / P_C ) \ar[ld] \ar[dd]^{\mu_{P_C}}  \\   
  & (Z_{P_B}, Z_{P_C}) \ar[ld] \ar[rd] &  \\
 \overline{\bf{O}}_{B,R} 
 & \stackrel{\text{Springer dual}}{\leftrightsquigarrow} 
 & \overline{\bf{O}}_{C,R} 
} 
\end{split}
\end{align}
together with the \emph{seesaw identity}
\begin{align*}
    \deg \mu_{P_B} \cdot \deg \mu_{P_C} 
    = \#\bar{A}(\bf{O}_{B,R}) 
    = \#\bar{A}(\bf{O}_{C,R}),
\end{align*}
where $\bar{A}(-)$ denotes Lusztig’s canonical quotient.  
Moreover, the Stein factorization pair $(Z_{P_B},Z_{P_C})$ forms a mirror pair in the sense that they share the same stringy Hodge numbers \cite{FRW24}.

While these results reveal a deep interaction between Langlands duality and Springer duality, they apply only to Richardson orbits. However, Springer duality is defined for the broader class of \emph{special} nilpotent orbits, many of which are not Richardson and hence admit no polarization in the usual sense. This prevents a direct extension of the above seesaw picture beyond the Richardson setting.

The primary motivation of this paper is to overcome this obstruction by introducing a generalized notion of polarization that applies to arbitrary nilpotent orbits.

\begin{definition}
Let $\bf{O}_e$ be a nilpotent orbit of type~B,~C, or~D.  
A \emph{minimal Richardson orbit} $\bf{O}_R$ for $\bf{O}_e$ is a Richardson orbit of the same type such that 
(i) $\overline{\bf{O}}_R \supset \bf{O}_e$, and 
(ii) $\bf{O}_R$ is minimal with respect to the closure order among all such Richardson orbits.

A \emph{pseudo-polarization} of $\bf{O}_e$ is any polarization $P$ of a minimal Richardson orbit $\bf{O}_R$ associated with $\bf{O}_e$.
\end{definition}

In Propositions~\ref{prop.mim_R_B}, \ref{prop.mim_R_C}, and \ref{prop.mim_R_D}, we classify all minimal Richardson orbits for a given nilpotent orbit $\bf{O}_e$, thereby all pseudo-polarizations of $\bf{O}_e$ can be determined, see Remark~\ref{rem.pseudo-polarization}.

Having fixed such a pseudo-polarization $P$, we then study the geometry of the corresponding fiber of the generalized Springer map. Concretely, we consider the restricted diagram
\begin{align}\label{intro.diagram2}
\begin{split}
    \xymatrix{
     & T^*(G/P) \ar[d]^{\mu_P} \\
     \bf{O}_e \ar@{^{(}->}[r] & \overline{\bf{O}}_R 
    }
\end{split}
\end{align}
For $e \in \bf{O}_e$, the fiber $\mu_P^{-1}(e)$ is called the \emph{Spaltenstein variety} associated with $(G,P,e)$. In general, Spaltenstein varieties are neither smooth nor irreducible. Even in cases where an affine paving exists \cite{Spa76, Spa83a, DeCLP88, DeCM22}, singularities may still occur.

In addition, unlike Springer fibers, Spaltenstein varieties need not be pure dimensional \cite[\S~11.6]{Spa82}. Nevertheless, relatively little is known in general about when pure dimensionality holds.  
Pure dimensionality holds for Springer fibers (i.e., when $P$ is a Borel subgroup) and for all Spaltenstein varieties of type~A, by work of Spaltenstein and Steinberg \cite{Ste74,Spa76,Spa77}. In addition, Li \cite{Li20} showed that for types~B,~C, and~D, pure dimensionality holds whenever the nilpotent orbit has partition $[1^{\sf w_1}, 2^{\sf w_2}, 3^{\sf w_3}, \ldots]$ with $\sf w_i \sf w_{i+1}=0$ for all $i$.

Our main geometric result is as follows.

\begin{theorem}[Theorems~\ref{Thm:Spal_fiber B}, \ref{Thm:Spal_fiber C}, \ref{Thm:Spal_fiber D}]
For any nilpotent orbit $\bf{O}_e$ of type~B,~C, or~D and any pseudo-polarization $P$, the reduced Spaltenstein variety $\mu_P^{-1}(e)^{\mathrm{red}}$ is an iterated fibration over a point, where each fiber is isomorphic to either an orthogonal Grassmannian or an isotropic Grassmannian. In particular, $\mu_P^{-1}(e)^{\mathrm{red}}$ is smooth and pure dimensional.
\end{theorem}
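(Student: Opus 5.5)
The argument will run one type at a time, but the mechanism is the same in types B, C and D, so I describe type C and indicate the changes for B and D.

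\textbf{Setup.} Let $V$ be the standard representation of dimension $N$, with its nondegenerate form. A parabolic $P$ is the stabilizer of an isotropic flag
\[
0=V_0\subset V_1\subset\cdots\subset V_k\subset V_k^\perp\subset\cdots\subset V_1^\perp\subset V,
\]
with $\dim V_i/V_{i-1}=p_i$. Then
\[
\mu_P^{-1}(e)^{\mathrm{red}}=\{\text{such flags with } eV_i\subset V_{i-1},\ eV_k^\perp\subset V_k\}.
\]
The classification of minimal Richardson orbits in Propositions~\ref{prop.mim_R_B}, \ref{prop.mim_R_C}, \ref{prop.mim_R_D}, together with Remark~\ref{rem.pseudo-polarization}, gives the pseudo-polarizations of $\bf{O}_e$ explicitly. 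I expect them to be obtained recursively. Given the partition $\lambda$ of $e$ and a pseudo-polarization, the first block size $p_1$ should be forced to equal a specific quantity read off from $\lambda$: roughly the number of columns, or the number of parts, possibly corrected by one to respect the parity conditions. Moreover, the remaining Levi data should form a pseudo-polarization for a smaller orbit $\bf{O}_{e'}$ on the space $V_1^\perp/V_1$, where $e'$ is the induced map.

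\textbf{Induction.} I will induct on $\dim V$. Consider the projection
\[
\mu_P^{-1}(e)^{\mathrm{red}}\to\{V_1 \text{ isotropic},\ \dim V_1=p_1,\ eV_1=0,\ V_1\subset \operatorname{Im}(e)^{\perp}\cap\cdots\}.
\]
The condition $eV_1=0$ comes from $eV_1\subset V_0$. Compatibility with the rest of the flag forces $V_1\subset \ker e$ and $eV\subset V_1^\perp$; since $\operatorname{Im} e=(\ker e)^\perp$, the second condition is automatic. The key claim is that, for the forced value of $p_1$, the allowed $V_1$ are exactly the isotropic (Lagrangian-type) $p_1$-subspaces of a fixed space $W$ on which the induced form has a controlled radical. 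Here $W$ should be $\ker e/(\ker e\cap \operatorname{Im} e)$, or the analogous space built from $\ker e\cap \operatorname{Im} e^{j}$. To see this, I will show that $V_1$ must contain a canonical subspace $\ker e\cap\operatorname{Im} e$, or more generally $\ker e\cap\operatorname{Im}e^{j}$; otherwise the Jordan type of $e'$ on $V_1^\perp/V_1$ cannot lie in the closure of the smaller Richardson orbit. Minimality of $\bf{O}_R$ is exactly what should force this. Consequently the base is an isotropic Grassmannian in type C, or an orthogonal Grassmannian in types B and D, of a fixed nondegenerate quadratic or symplectic space $W$.

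\textbf{Constant Jordan type on the base.} Next I will show that the Jordan type of $e'$ on $V_1^\perp/V_1$ is the same for every point of the base. I would compute it from $\lambda$ by the usual rule: remove one box from the appropriate columns, then apply a parity-correcting collapse depending on the orthogonal or symplectic type of $W$. I expect the result to be independent of the choice of $V_1$ because $V_1$ is squeezed between canonical subspaces. I would then check against the classification propositions that the remaining parabolic is again a pseudo-polarization of $\bf{O}_{e'}$, and that the fibration is locally trivial. For local triviality, $Z_G(e)$, or its reductive part, acts transitively on the base through the isometry group of $W$, so the fibers are isomorphic; the map is a $Z_G(e)$-equivariant map onto a homogeneous space, hence a fibration with fiber $\mu_{P'}^{-1}(e')^{\mathrm{red}}$. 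Induction then gives the iterated Grassmannian fibration. Smoothness and pure dimensionality follow because each stage is a smooth homogeneous base with smooth equidimensional fibers.

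\textbf{Main obstacle.} The hardest step is the combinatorial heart: showing that pseudo-polarizations peel off compatibly, with the forced $p_1$ and the residual parabolic again pseudo-polarizing $e'$. This is especially delicate for type D, where the two families of maximal isotropic subspaces and very even orbits appear, and for the B/D parity collapses. I would first try to extract the recursion directly from the explicit lists in Propositions~\ref{prop.mim_R_B}--\ref{prop.mim_R_D}, using the induction description of Richardson orbits from Levi data. Failing that, I would handle the cases where $p_1$ is not forced by reordering the blocks of the flag, starting from $V_k$ inward instead of outward.
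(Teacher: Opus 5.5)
There is a genuine gap. Your argument rests on two claims that are only announced (``should'', ``I expect'', ``minimality should force this''):
\begin{itemize}
\item the ``key claim'' that the admissible $V_1$ form a single isotropic Grassmannian of a fixed nondegenerate space;
\item the claim that the residual data on $V_1^\perp/V_1$ form a pseudo-polarization of an orbit $\mathbf{O}_{e'}$ of constant Jordan type.
\end{itemize}
Together these are the whole content of the theorem.

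Moreover, the concrete version of the key claim that you state is false.
\begin{itemize}
\item For $j\ge 2$ the space $\ker e\cap\operatorname{Im}e^{j-1}$ is totally isotropic for the ambient form; already $\ker e\cap\operatorname{Im}e$ is the radical of the form on $\ker e$. So ``isotropic for the induced form'' is vacuous there.
\item The form that actually governs $V_1$ lives on the multiplicity space of the size-$d$ blocks: $B(v,w)=\langle v,\tilde w\rangle$ with $e^{d-1}\tilde w=w$. It is $(-1)^{d-1}\varepsilon$-symmetric, so its type depends on the parity of $d$, not only on the Lie type.
\item Concretely, take $e=[2,2,1]\in\mathfrak{so}_5$. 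Its minimal Richardson orbit is $[3,1,1]$. For the pseudo-polarization stabilizing an isotropic line $\ell$, the conditions $e\ell=0$, $e\ell^\perp\subset\ell$, $eV\subset\ell^\perp$ cut out exactly $\mathbb{P}(\ker e\cap\operatorname{Im}e)\cong\mathbb{P}^1$. Every line in that $2$-dimensional, totally isotropic space works.
\item Hence $V_1$ does not contain $\ker e\cap\operatorname{Im}e$. The base is $\mathrm{IG}(1,2)$, an isotropic Grassmannian in type~B, contrary to your prediction.
\item Your guess about $p_1$ is also off. With the paper's convention $p_1\le\cdots\le p_k$, one has $V_1=F_1^\perp=e^{k-1}F_k^\perp$. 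It sits in the bottoms of the longest Jordan blocks, and $p_1$ is the number of parts of size $\ge 2k$ in the induced partition, not roughly the number of parts.
\end{itemize}

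Even with the form corrected, the induction needs two things you do not supply.
\begin{itemize}
\item \emph{Homogeneity of the base.} You need the image of $F\mapsto V_1$ to be one closed homogeneous space. A priori $V_1$ could mix bottoms of blocks of different sizes. Since $Z_G(e)$ preserves the filtration by $\ker e\cap\operatorname{Im}e^{j}$, that would give several orbits.
\item \emph{The residual parabolic is again a pseudo-polarization of $e'$.} This is a global minimality statement in the closure order, and it is not evidently inherited by $V_1^\perp/V_1$. You would first have to compute the Jordan type of $e'$, which depends on how $V_1$ sits relative to the multiplicity forms. For example, a line in the $2$-dimensional quadratic multiplicity space of an odd part $d$ gives $[d-1,d-1]$ or $[d,d-2]$, according as it is isotropic or not. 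You would then have to re-run Propositions~\ref{prop.mim_R_B}--\ref{prop.mim_R_D} for $e'$.
\end{itemize}

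The paper avoids all of this; there is no induction on rank and no residual orbit to identify.
\begin{itemize}
\item A dimension count gives $F_{k-i}^\perp=e^{i}(F_k^\perp)$, so the flag is determined by its top isotropic piece.
\item It squeezes $E\subset F_k^\perp\subset E\oplus V$, where $V=V'\oplus V''$ is spanned by middle Jordan vectors of the distinguished parts. $V'$ carries a symmetric form and $V''$ the skew form $\langle v,fw\rangle$.
\item The conditions $e^iF_k^\perp\subset F_{k-i}^\perp$ become intersection-dimension conditions on a maximal isotropic $W\subset V$, and isotropy forces these to be equalities.
\item Lemmas~\ref{lem:isotropic subspace} and~\ref{lem:Lagrangian subspace} then give the iterated $\mathrm{OG}$ fibration times a product of $\mathrm{IG}$'s.
\end{itemize}
Your fallback of ``starting from $V_k$ inward'' points in exactly this direction, and it is the route to pursue.
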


This result contrasts sharply with the classical situation: although the subregular orbit is itself Richardson, subregular Springer fibers are typically singular. The smoothness obtained here confirms that minimal Richardson orbits and pseudo-polarizations capture the correct geometric replacement for polarizations in the special-orbit setting.

Returning to types~B and~C, let $\mathbf{O}_B$ and $\mathbf{O}_C$ be Springer dual special orbits.  
Let $\mathbf{O}_{B,R}$ be a minimal Richardson orbit for $\mathbf{O}_B$. By Proposition~\ref{prop.Spr dual min Richardson}, its Springer dual $\mathbf{O}_{C,R}$ is a minimal Richardson orbit for $\mathbf{O}_C$. Choosing Langlands dual pseudo-polarizations $P_B$ and $P_C$, we obtain
\begin{align*}
    \xymatrix{ 
     & T^*(\SO_{2n+1}/P_{B})  \ar[d]_{\mu_{P_B}} 
     & \stackrel{\text{Langlands dual}}{\leftrightsquigarrow} 
     & T^*(\Sp_{2n} / P_{C}) \ar[d]^{\mu_{P_C}} \\     
     \mathbf{O}_{B} \ar@{^{(}->}[r] 
     & \overline{\mathbf{O}}_{B,R} 
     & \stackrel{\text{Springer dual}}{\leftrightsquigarrow} 
     & \overline{\mathbf{O}}_{C,R} 
     & \mathbf{O}_{C} \ar@{_{(}->}[l]  
    }
\end{align*}

Let $\#\mu_{P_B}^{-1}(e_B)$ and $\#\mu_{P_C}^{-1}(e_C)$ denote the numbers of connected components of the corresponding Spaltenstein varieties.

\begin{theorem}[Theorem~\ref{thm.duality}]
With the notation above:
\begin{itemize}
\item[(1)] One has the generalized seesaw identity
\begin{align*}
\#\mu_{P_B}^{-1}(e_B)\cdot \#\mu_{P_C}^{-1}(e_C)
= \#\bar{A}(\mathbf{O}_{B})
= \#\bar{A}(\mathbf{O}_{C}),
\end{align*}
where $\bar{A}(\mathbf{O})$ is Lusztig’s canonical quotient.

\item[(2)] The connected components of $\mu_{P_B}^{-1}(e_B)$ and $\mu_{P_C}^{-1}(e_C)$ have the same $E$-polynomial.
\end{itemize}
\end{theorem}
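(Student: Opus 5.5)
The plan is to reduce both statements to the explicit description of the Spaltenstein varieties given by Theorems~\ref{Thm:Spal_fiber B} and~\ref{Thm:Spal_fiber C}, combined with the combinatorial classification of minimal Richardson orbits in Propositions~\ref{prop.mim_R_B} and~\ref{prop.mim_R_C}. First, fix Springer dual special orbits $\mathbf{O}_B$, $\mathbf{O}_C$ with partitions $\lambda_B$, $\lambda_C$. Take a minimal Richardson orbit $\mathbf{O}_{B,R}$ for $\mathbf{O}_B$, and use Proposition~\ref{prop.Spr dual min Richardson} to get its dual $\mathbf{O}_{C,R}$. Choose Langlands dual pseudo-polarizations $P_B$, $P_C$; their Levi types are the same sequence $(p_1,\dots,p_k;q)$, with the last factor $\SO_{2q+1}$ versus $\Sp_{2q}$.

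By the main geometric theorem, $\mu_{P_B}^{-1}(e_B)^{\mathrm{red}}$ is a disjoint union of connected components. Each component is an iterated fibration whose successive fibers are orthogonal Grassmannians $OG(a,N)$ or isotropic Grassmannians $IG(a,2m)$. I will read off from the proofs of those theorems, step by step, which Grassmannians occur.

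The number of components should come from the steps where the fiber is a \emph{maximal} orthogonal Grassmannian $OG(m,2m)$, which has two components. A step contributes a factor $2$ exactly when the relevant orthogonal space has even dimension $2m$ and the flag step is Lagrangian. This happens on the B side for a certain subset of the indices, and on the C side for the complementary subset under Springer duality. In that case the symplectic partner is $IG(m,2m)$ or $IG(m,2m+\cdots)$, which is connected.

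For the proof of (1), I will express $\#\mu_{P_B}^{-1}(e_B)$ as $2^{r_B}$ and $\#\mu_{P_C}^{-1}(e_C)$ as $2^{r_C}$. Here $r_B$ and $r_C$ count specific "odd/even jump" positions in $\lambda_B$, $\lambda_C$ relative to the minimal Richardson data. I will then compare $r_B+r_C$ with Sommers' combinatorial formula for $\#\bar A(\mathbf{O})$ (a power of $2$ counting certain intervals in the partition of a special orbit). I will also invoke the known equality $\#\bar A(\mathbf{O}_B)=\#\bar A(\mathbf{O}_C)$ for Springer dual special orbits.

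The matching step is the main obstacle. It needs a careful case analysis of how the minimal Richardson orbit is built from $\lambda$, by the recipe in Propositions~\ref{prop.mim_R_B} and~\ref{prop.mim_R_C}. One must check that each "$\bar A$-interval" of $\lambda_B$ produces exactly one disconnecting $OG(m,2m)$ step on precisely one of the two sides. My first attempt is to proceed block by block. The classification should decompose $\lambda$ into blocks on which the minimal Richardson orbit is induced from a single parabolic step, so the identity reduces to a local check on each block.

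For (2), I will use multiplicativity of $E$-polynomials in Zariski-locally trivial fibrations. The fibrations here come from homogeneous bundles over partial flag varieties of parabolic subgroups, hence are Zariski-locally trivial. So the $E$-polynomial of a component is the product of the $E$-polynomials of its fibers, namely $E(OG(a,2m+1))$ on the B side and $E(IG(a,2m))$ on the C side. These agree, since both are the Poincaré polynomials of $\SO_{2m+1}/P$ and $\Sp_{2m}/P$ with Langlands dual Levis and equal Weyl groups, so their Bruhat cell counts coincide. For each maximal $OG(m,2m)$, a single component $OG(m,2m)^{\circ}\cong OG(m-1,2m-1)$ has the same $E$-polynomial as $IG(m-1,2m-2)$. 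The other cases pair as $OG(m,2m)^{\circ}$ against $IG(m,2m)$ after the index shift dictated by the Levi data. Checking that this shift is exactly compensated step by step is bookkeeping on the same block decomposition used in (1).
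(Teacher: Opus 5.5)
Your framework uses the same ingredients as the paper:
\begin{itemize}
\item the decompositions $\mu_{P_B}^{-1}(e_B)^{\mathrm{red}}\cong\mathbf{W}_B\times\mathbf{W}'_B$ and $\mu_{P_C}^{-1}(e_C)^{\mathrm{red}}\cong\mathbf{W}_C\times\mathbf{W}'_C$;
\item multiplicativity of $\mathrm{E}$;
\item the identity $\mathrm{E}(\mathrm{OG}(m,2m+1))=\mathrm{E}(\mathrm{IG}(m,2m))$;
\item the isomorphism $\mathrm{OG}(m,2m)^\circ\cong\mathrm{OG}(m-1,2m-1)$.
\end{itemize}
However, the part that actually proves the theorem is postponed (``the main obstacle'', ``bookkeeping''). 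The missing input is specific: you never say how the partition $\mathbf{d}_C$ of $e_C$ is obtained from the partition $\mathbf{d}_B$ of $e_B$.

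The paper uses the block description \eqref{dual_partition} of Springer duality. B1 blocks are unchanged, and a B2 or B3 block is replaced by $\mathbf{T}'_i$, i.e.\ its first odd part is lowered by $1$ and its last odd part raised by $1$. For a distinguished odd part of multiplicity $n_j$ on the B side, this gives the multiplicity $n'_j$ of the corresponding part on the C side:
\begin{itemize}
\item $n'_j=n_j$ if the run of equal parts lies in B1 blocks;
\item $n'_j=n_j-2$ if it has even length and straddles two B2/B3 blocks;
\item $n'_j=n_j-1$ if it has odd length.
\end{itemize}
The $j$-th step of the tower in Lemma~\ref{lem:isotropic subspace} is $\mathrm{OG}(m_j,N_j)$ with $N_j\equiv n_1+\cdots+n_j \pmod 2$. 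The relation above is precisely what makes each connected step $\mathrm{OG}(m,2m+1)$ face a factor $\mathrm{IG}(m,2m)$, and each disconnecting step $\mathrm{OG}(m,2m)$ face $\mathrm{IG}(m-1,2m-2)$.

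The matching is also crossed: $\mathbf{W}_B^\circ$ against $\mathbf{W}'_C$, and $\mathbf{W}'_B$ against $\mathbf{W}_C^\circ$. Both sides carry OG and IG factors, whereas your argument for (2) pairs OG factors on the B side only with IG factors on the C side.

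Two of your concrete claims would fail.
\begin{itemize}
\item The partner of a disconnecting $\mathrm{OG}(m,2m)$ cannot be $\mathrm{IG}(m,2m)$. With $q=uv$, $\mathrm{E}(\mathrm{IG}(m,2m))=\prod_{j=1}^{m}(q^j+1)$, while $\mathrm{E}(\mathrm{OG}(m,2m)^\circ)=\prod_{j=1}^{m-1}(q^j+1)$. So the multiplicity on the other side must drop by $2$.
\item The shift cannot be ``dictated by the Levi data'', because $P_B$ and $P_C$ have the same $(p_1,\dots,p_k)$. It comes entirely from the difference between $\mathbf{d}_B$ and $\mathbf{d}_C$.
\end{itemize}

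For (1), the number of components of $\mathbf{W}_B$ is $2$ raised to the number of indices $j$ with $n_1+\cdots+n_j$ even. This is a cumulative parity condition running across blocks, so your hoped-for reduction to a local check on each block does not come for free. One must translate that parity into the block structure, and that translation is exactly what the explicit duality on partitions supplies.

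The paper proceeds as follows:
\begin{itemize}
\item it cuts $(n_1,\dots,n_k)$ into minimal pieces of even sum, each giving a factor $2$ for $\mathbf{W}_B$;
\item it counts the B2 blocks before $d_{2l+1}$, each giving a factor $2$ for $\mathbf{W}_C$;
\item it relates these counts to $\bar A(\mathbf{O}_B)\cong\bar A(\mathbf{O}_C)\cong(\mathbb{Z}/2)^{b}$, where $b$ is the number of type B2 blocks of $\mathbf{d}_B$.
\end{itemize}
Using Sommers' description of $\bar A$ would be a legitimate alternative for the right-hand side. But without the relation between $\mathbf{d}_B$ and $\mathbf{d}_C$ there is nothing to compare it with. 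As it stands, the proposal proves neither (1) nor (2).
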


Part~(1) extends \eqref{intro.diagram} from Richardson orbits to all special orbits. Although we cannot yet construct mirror finite covers of special orbit closures, the behavior of Spaltenstein varieties provides strong geometric evidence for a broader Springer–Langlands–mirror correspondence.

Finally, we note that special orbits play a central role in modern geometric representation theory.  
In \cite{WWW24}, it was shown that two special orbits in types~B and~C are Springer dual if and only if their associated residually nilpotent Hitchin systems have isomorphic Hitchin bases;  
in \cite{WWW25}, we proved that a type~D orbit is special if and only if the generic fiber of its residually nilpotent Hitchin system is a torsor under a \emph{self-dual} abelian variety.  
The present work fits naturally into this emerging picture.

\medskip
\noindent\textbf{Organization of the paper.} In Section~\ref{Sec:preliminaries} we review partitions, special orbits, and Richardson orbits in types B/C/D. In Section~\ref{sec.mini_R} we classify minimal Richardson orbits and pseudo-polarizations. In Section~\ref{s.Spal_fiber} we prove the iterated orthogonal/isotropic Grassmannian fibration description of Spaltenstein varieties. Finally, in Section~\ref{Sec: E poly} we establish the E-polynomial duality and the generalized seesaw identity for Springer dual special orbits in types B and C.

\subsection*{Acknowledgements}
We would like to thank Professor Baohua Fu, Professor Yongbin Ruan, and Bin Wang for many helpful discussions. Y. Wen would like to thank Woonam Lim for his support during the author’s stay at Yonsei University, where this work was carried out.

\section{Preliminaries on nilpotent orbits}\label{Sec:preliminaries}

Let $G$ be a complex reductive group, and $\g$ be its Lie algebra. Taking a nilpotent element $e \in \g$, the adjoint orbit $\mathbf{O}_e:= G \cdot e$ is called a \emph{nilpotent orbit}. In the following, we will focus on types B, C, and D, i.e., $G=\SO_{2n+1}, \Sp_{2n}$, or $\SO_{2n}$. We will use ``$\equiv$'' to denote the congruence modulo 2.

Nilpotent orbits of classical types are known to be characterized by certain types of partitions. For more details, see \cite{CM93}. In this discussion, we will not distinguish between the partitions and the nilpotent orbits they label. Fix an integer $N$. We denote by $\cP(N)$ the set of partitions $\bf{d} =[d_1 \geq d_2 \cdots \geq d_r]$ of $N$.

For $\varepsilon= \pm 1$, we define:
\begin{align*}
	\mathcal{P}_{\varepsilon}(N) = \big\{ [d_1, \ldots, d_r] \in \mathcal{P}_{}(N) \ \big{|} \ \sharp \{j \ | \ d_j =i \} \equiv 0 \; \text{for} \; \forall i \; (-1)^i= \varepsilon   \big\}.
\end{align*}

It is well-known that $\cP(N)$, $\cP_{1}(2n+1)$, and $\cP_{-1}(2n)$ are in one-to-one correspondence with the nilpotent orbits in $\mathfrak{sl}_N$, $\mathfrak{so}_{2n+1}$, and $\mathfrak{sp}_{2n}$, respectively. For $\cP_{1}(2n)$, this correspondence holds except in special cases where all parts in $\bf{d}_D \in \cP_1(2n)$ are even. The corresponding orbit $\bf{O}_{\bf{d}_D}$ is called \emph{very even}. As an $\SO_{2n}$-orbit, it has two connected components: $\bf{O}_{\bf{d}_D}= \bf{O}_{\bf{d}_D}^{I} \bigsqcup \bf{O}_{\bf{d}_D}^{II}$.

The set $\cP(N)$ is partially ordered as follows:  
\begin{align}\label{partial-order}
    \bf{d} =\left[ d_1, \ldots, d_N  \right] \geq \mathbf{f}= \left[ f_1, \ldots, f_N  \right]  \Longleftrightarrow  \sum_{j=1}^k d_j \geq \sum_{j=1}^k f_j, \quad \text{for all} \quad 1 \leq k \leq N. 
\end{align}
This induces a partial order on $\mathcal{P}_{\pm 1}(N)$, which coincides with the partial order on nilpotent orbits given by the inclusion of closures (cf. \cite[Theorem 6.2.5]{CM93}). Using this partial order, the closure of nilpotent orbit $\bf{O}_{\bf{d}}$ with partition $\bf{d}$ is defined as the union of all smaller orbits 
\[
    \bar{\bf{O}}_{\bf{d}} = \bigsqcup_{\bf{f} \leq \bf{d} } \bf{O}_{\bf{f}} .
\]

For a partition $\mathbf{d}=\left[ d_1,\ldots, d_{r} \right]$ in $\cP(2n+1)$, there is a unique largest partition in $\cP_{1}(2n +1)$ dominated by $\mathbf{d}$. This partition, called the $B$--collapse of $\mathbf{d}$, is denoted by $\mathbf{d}_B$. The $B$--collapse can be computed as follows: if $\mathbf{d}$ is not already in $\cP_{1}(2n+ 1)$, then at least one of its even parts must occur with odd multiplicity. Let $q$ be the largest such part. Replace the last occurrence of $q$ in $\mathbf{d}$ by $q-1$ and the first subsequent part $r$ strictly less than $q-1$ by $r+1$. This process is called a \emph{collapse}. Repeat the process until a partition in $\cP_{1}(2n +1)$ is obtained, which is denoted by $\mathbf{d}_B$. Geometrically, let $\bf{O}_{\bf{d}}$ be a nilpotent orbit in $\mathfrak{sl}_{2n+1}$, then $\overline{\bf{O}}_{\mathbf{d}_B} = \overline{\bf{O}}_{\bf{d}} \cap \mathfrak{so}_{2n+1} $.

Similarly, there is a unique largest partition $\mathbf{d}_{D/C}$ in $\cP_{\pm 1}(2n)$ dominated by any given partition $\mathbf{d}$ of $2n$, which is called the $D/C$--collapse of $\mathbf{d}$. The procedure to compute $\mathbf{d}_{D/C}$ is similar to that of $\mathbf{d}_B$. 

To streamline later results, we introduce specific types of partitions as building blocks:
\vspace{0.2 cm}

\textbf{For type B}:
\begin{itemize}
	\item Type B1: $[\alpha, \alpha]$, where $\alpha \equiv 1$.
	\item Type B1*: $[\beta, \beta]$, where $\beta \equiv 0$.
	\item  Type B2: $[\alpha_1, \beta_1, \beta_1, \beta_2, \beta_2, \ldots, \beta_k, \beta_k, \alpha_2]$, where $\alpha_1 > \beta_1 \geq \beta_2 \geq \ldots \geq \beta_k > \alpha_2$, $\alpha_i \equiv 1$, $\beta_j \equiv 0$, and $k \geq 0$.
	\item Type B3: $[\alpha_1, \beta_1, \beta_1, \beta_2, \beta_2, \ldots, \beta_k, \beta_k ]$, where the same conditions as in Type B2.
\end{itemize}

\textbf{For type C}, we define: 
\begin{itemize}
	\item Type C1: $[\alpha, \alpha]$, where $\alpha \equiv 1$.
	\item Type C1*: $[\beta, \beta]$, where $\beta \equiv 0$.
	\item  Type C2: $[\beta_1, \alpha_1, \alpha_1, \alpha_2, \alpha_2, \ldots, \alpha_k, \alpha_k, \beta_2]$, with $\beta_1 > \alpha_1 \geq \alpha_2 \geq \ldots \geq \alpha_k > \beta_2 \geq 0$,\footnote{$\beta_2$ can be 0. If $\beta_2=0$, we omit $\beta_2$.} $\alpha_i \equiv 1$, $\beta_j \equiv 0$, and $k \geq 0$.
\end{itemize}

\textbf{For type D}:
\begin{itemize}
	\item Type D1: $[\alpha, \alpha]$, where $\alpha \equiv 1$.
	\item Type D1*: $[\beta, \beta]$, where $\beta \equiv 0$.
	\item  Type D2: $[\alpha_1, \beta_1, \beta_1, \beta_2, \beta_2, \ldots, \beta_k, \beta_k, \alpha_2]$, as in Type B2.
\end{itemize}

Then, we can uniquely rewrite the partition of the nilpotent orbit using the above building blocks:

\begin{lemma}\label{lem.building block}
	For any partition $\bf{d}$ labeling a nilpotent orbit of type B, C, or D, it can be uniquely rewritten as:
	\[
	    \bf{d} = [\bf{T}_1, \bf{T}_2, \ldots, \bf{T}_m]
	\]
	where the blocks $\bf{T}_i$ correspond to the types listed above, for type B we further require only $\bf{T}_m$ is of Type B3.
\end{lemma}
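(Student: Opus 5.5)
The plan is to construct the decomposition by a deterministic pairing rule and then show that any decomposition must follow this rule. Throughout, $[\mathbf{T}_1,\ldots,\mathbf{T}_m]$ means that concatenating the blocks gives $\mathbf{d}$ written in non-increasing order. So each block is a contiguous segment of the sorted sequence $d_1\ge d_2\ge\cdots$.

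The key observation is a parity count. In type B (resp.\ D), $\mathbf{d}\in\mathcal{P}_1(N)$, so every even part has even multiplicity. Hence the number of odd parts is congruent to $N$, which is odd for B and even for D. In type C, $\mathbf{d}\in\mathcal{P}_{-1}(2n)$, so every odd part has even multiplicity. Hence the number of even parts is even once we append a part $0$ when necessary, which is exactly what the footnote on $\beta_2=0$ allows.

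\emph{Existence in types B and D.} List the odd parts $a_1\ge a_2\ge\cdots\ge a_{2s+\epsilon}$, where $\epsilon=1$ in type B and $\epsilon=0$ in type D. Group them consecutively as $(a_1,a_2),(a_3,a_4),\ldots$.
\begin{enumerate}
\item If $a_{2i-1}=a_{2i}$, the pair gives a block $[\alpha,\alpha]$ of type B1/D1. No even part lies strictly between these two entries.
\item If $a_{2i-1}>a_{2i}$, the even parts strictly between them have even multiplicities. Sorted, they are $\beta_1,\beta_1,\ldots,\beta_k,\beta_k$, and together with the pair they form a block of type B2/D2. The strict inequalities hold automatically because the parities differ.
\item In type B, the leftover $a_{2s+1}$, together with all even parts below it, forms the block of type B3.
\item All remaining even parts lie above $a_1$ or between $a_{2i}$ and $a_{2i+1}$ (and, in type D, below $a_{2s}$). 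They have even multiplicity, so they split into consecutive equal pairs $[\beta,\beta]$ of type B1*/D1*.
\end{enumerate}

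\emph{Existence in type C.} Run the same construction with the roles of parities swapped. Pair the even parts $b_1\ge b_2\ge\cdots$ consecutively, appending $0$ if their number is odd.
\begin{enumerate}
\item Equal pairs give C1* blocks.
\item Unequal pairs, together with the odd parts strictly between them, give C2 blocks.
\item The remaining odd parts pair into C1 blocks.
\end{enumerate}

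\emph{Uniqueness.} Suppose $\mathbf{d}=[\mathbf{T}_1,\ldots,\mathbf{T}_m]$ is any decomposition. Each block contains either zero or two parts of the ``distinguished'' parity (odd for B/D, even for C), and only the final B3 block contains exactly one. Since the blocks are contiguous segments of the sorted sequence, the distinguished-parity parts must be grouped in consecutive pairs from the top. This is forced to be the pairing $(a_1,a_2),(a_3,a_4),\ldots$, with the unpaired one being the smallest in type B.

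Once this pairing is fixed, each block's type is determined.
\begin{enumerate}
\item Equal pairs must form a B1/C1*/D1 block, since the B2/C2/D2 blocks require strict inequality.
\item For unequal pairs, contiguity forces all parts strictly between the two entries into that same block.
\item The remaining parts must be pairs of equal value, and their pairing is forced by sorting.
\end{enumerate}

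The only genuinely delicate point is the even parts lying below the last odd part in type B. Without extra conditions they could be either absorbed into B3 or split off as B1* blocks. The requirement that only $\mathbf{T}_m$ is of type B3, read together with B3 being the final block, forces them into B3 and removes this ambiguity.

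I expect the main obstacle to be stating these boundary conventions precisely. The two places that need care are the type B tail just described and the type C convention of appending $\beta_2=0$ to make the number of even parts even. Everything else is bookkeeping once the parity count and the contiguity observation are in place.
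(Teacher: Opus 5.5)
The paper states this lemma without proof and treats it as immediate from the block definitions, so there is no argument of the authors' to compare against. Your proof is correct and is the natural one. The parity count fixes how many parts of the distinguished parity there are. Contiguity of the blocks in the sorted sequence then forces the consecutive pairing $(a_1,a_2),(a_3,a_4),\ldots$. After that, the type of each pair and the grouping of the leftover parts are determined.

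Three of your readings are right and are actually needed, not just convenient:
\begin{enumerate}
\item Reading $[\mathbf{T}_1,\ldots,\mathbf{T}_m]$ as a non-increasing concatenation is how the paper uses the notation, and without it uniqueness fails. For the type B partition $[5,3,3,1,1]$, the multiset decomposition $[5,1],[3,3],[1]$ would compete with $[5,3],[3,1],[1]$.
\item Using ``$\mathbf{T}_m$ is the B3 block'' to absorb the even tail into B3 is exactly what that hypothesis is for.
\item Reading $k=0$ in B2/C2/D2 as a strict inequality is what separates those blocks from B1/D1 and C1*.
\end{enumerate}

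The one point you should promote from bookkeeping to an explicit convention is the type C zero. Read literally, the footnote lets \emph{any} C2 block omit $\beta_2=0$, and then the lemma is false. In $\mathfrak{sp}_6$ one has $[4,2]=[\,[4,2]\,]=[\,[4],[2]\,]$. For the special partition $[4,2,1,1]$, the reading $[4],[2,1,1]$ would even produce a C2 block with $k\ge 1$, contradicting the paper's criterion for special partitions. So state that the omitted $\beta_2=0$ is a single trailing zero appended to the sorted sequence; equivalently, only $\mathbf{T}_m$ may omit $\beta_2$. With that convention, every block contains exactly zero or two even parts, counting the trailing $0$. The parity of the number of nonzero even parts then decides whether $\mathbf{T}_m$ uses the zero, and your contiguity argument goes through verbatim.
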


\begin{remark}
	These decompositions play a very crucial role in the parabolic Hitchin system, where they provide a framework for understanding the structure and decompositions of local parabolic Higgs bundles. For more details, see~\cite{WWW24,WWW25}.
\end{remark}

By Springer\cite[Theorem 6.10]{Spr}, there is an injective map from the irreducible representations of the Weyl group to equivalence classes of pairs of nilpotent elements and irreducible representations of the component group. Specifically:  
\[
    \mathrm{Irr}(W) \hookrightarrow \{ (e, \rho) \}/G,
\]
where $e \in \g$ is a nilpotent element, and $\rho \in \mathrm{Irr}(A(\bf{O}_e))$. Here, $A(\bf{O}_e) = G^e_{ad} / (G^e_{ad})^\circ $ is the component group associated with the adjoint group $G_{ad}$. For pairs $(X, \rho)$ coming from the special representation, $\rho$ is automatically trivial. In such cases, we call the nilpotent orbit $\bf{O}_e$ \emph{special}. 

Given a partition $\bf{d}=\left[ d_1, \ldots, d_r  \right]$, its transpose partition $\bf{d}^t$ is defined as 
\[
    d^t_i = \sharp \left\{ j \ \big{|} \ d_j \geq i \right\}, \quad \text{for all} \; i.
\]     
\begin{definition}\label{def:special nilpotent}
    We call a partition $\bf{d} \in \mathcal{P}_{\varepsilon}(N)$ {\emph{special}} if 
    \begin{itemize}
        \item For $\varepsilon = -1$ and $N=2n$, the transpose partition $\bf{d}^t$ lies in $\mathcal{P}_{-1}(2n)$,

        \item For $\varepsilon = 1$ and $N=2n+1$, the transpose partition $\bf{d}^t$ lies in $\mathcal{P}_{1}(2n+1)$,

        \item For $\varepsilon = 1$ and $N=2n$, the transpose partition $\bf{d}^t$ lies in $\mathcal{P}_{-1}(2n)$.
    \end{itemize}
\end{definition}

Let $\bf{d} = [\bf{T}_1, \bf{T}_2, \ldots, \bf{T}_m]$ be as in Lemma~\ref{lem.building block}. Then, we have the following characterization of special partitions:

\begin{lemma}\
	\begin{itemize}
		\item Type B: $\bf{d}$ is special if and only if it contains no block of Type B1*.
		\item Type D: $\bf{d}$ is special if and only if it contains no block of Type D2 of the form $[\alpha_1, \beta_1, \beta_1, \ldots, \beta_k, \beta_k, \alpha_2]$ with $k \geq 1$.
		\item Type C: $\bf{d}$ is special if and only if it contains no block of Type C2 or the form $[\beta_1, \alpha_1, \alpha_1, \ldots, \alpha_k, \alpha_k, \beta_2]$ with $k \geq 1$.
	\end{itemize}
\end{lemma}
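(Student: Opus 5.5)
The plan is to reduce each transpose condition in Definition~\ref{def:special nilpotent} to a parity condition on consecutive parts of $\bf{d}$. Then we check that condition block by block, using the decomposition $\bf{d}=[\bf{T}_1,\ldots,\bf{T}_m]$ of Lemma~\ref{lem.building block}.

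\emph{Step 1 (translation).} Pad $\bf{d}$ with zeros, so $d_j=0$ for $j>r$. Since $d^t_i=\sharp\{j\mid d_j\ge i\}$, the value $k\ge 1$ occurs in $\bf{d}^t$ exactly $d_k-d_{k+1}$ times. Hence $\bf{d}^t\in\cP_{-1}$ (odd values occur with even multiplicity) if and only if $d_k\equiv d_{k+1}$ for every odd $k$. Likewise $\bf{d}^t\in\cP_{1}$ (even values occur with even multiplicity) if and only if $d_k\equiv d_{k+1}$ for every even $k\ge 2$. So:
\begin{itemize}
\item in types C and D, specialness means the pairs $(d_1,d_2),(d_3,d_4),\ldots$ have equal parity;
\item in type B, specialness means the pairs $(d_2,d_3),(d_4,d_5),\ldots$ have equal parity.
\end{itemize}

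\emph{Step 2 (types C and D).} Blocks C1, C1*, D1, D1* have length $2$. A C2 or D2 block has length $2k+2$, except a C2 block with $\beta_2=0$ omitted, which has length $2k+1$ and must be last. Hence every block starts at an odd index, and the pairs $(d_{2j-1},d_{2j})$ never straddle two blocks. They are either internal to a block or, in the truncated C2 case, pair the last entry with a padding $0$.
\begin{itemize}
\item Blocks C1, C1*, D1, D1* consist of one pair of equal parts, so they are fine.
\item A C2 or D2 block with $k=0$ is $[\beta_1,\beta_2]$ (possibly $\beta_2=0$) or $[\alpha_1,\alpha_2]$, which is fine.
\item If $k\ge1$, the first pair of the block is $(\beta_1,\alpha_1)$ or $(\alpha_1,\beta_1)$, of opposite parity, so the condition fails.
\end{itemize}
This gives the type C and type D statements.

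\emph{Step 3 (type B).} Since $|\bf{d}|$ is odd and Lemma~\ref{lem.building block} allows B3 only as $\bf{T}_m$, the last block is B3 and all earlier blocks have even length. So every block again starts at an odd index, but now the relevant pairs are of two kinds:
\begin{itemize}
\item pairs internal to a block, namely $(\beta_i,\beta_i)$ inside B2/B3, which are harmless;
\item cross pairs (last entry of $\bf{T}_i$, first entry of $\bf{T}_{i+1}$), plus the final pair of B3 with the padding $0$.
\end{itemize}
First entries of B1, B2, B3 are odd and that of B1* is even. Last entries of B1, B2 are odd and that of B1* is even. The final B3 pair is $(\beta_k,0)$, which is fine, or B3 has odd length with $k=0$ and there is no final pair.

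So without B1* every cross pair is (odd, odd) and $\bf{d}$ is special. Conversely, suppose some block B1* occurs. Take the last B1* in a maximal consecutive run of B1* blocks. The block after it exists, because B3 is last, and it starts with an odd part. The resulting cross pair is (even, odd), so $\bf{d}$ is not special.

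The main obstacle is Step 3: keeping the index bookkeeping straight. Everything hinges on the fact that all blocks before $\bf{T}_m$ have even length, so that the type-B pairs are exactly the cross-block pairs. Steps 1 and 2 are routine.
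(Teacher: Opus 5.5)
Your proof is correct, but it takes a different and more self-contained route than the paper. The paper gives essentially no argument. It invokes the alternative characterization of special partitions in \cite[p.~100]{CM93}, which is phrased via the number of parts of one parity lying between consecutive parts of the other parity or above the largest such part. It then asserts that the absence of B1* blocks, of D2 blocks with $k\ge 1$, and of C2 blocks with $k\ge 1$ agrees with that criterion.

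You instead derive the criterion directly from the definition. From $\sharp\{i \mid d^t_i=k\}=d_k-d_{k+1}$ you get the pair-parity conditions: $d_{2j-1}\equiv d_{2j}$ in types C and D, and $d_{2j}\equiv d_{2j+1}$ in type B. You then check them block by block, using two structural facts: every block begins at an odd index, and in type B the parity of $|\mathbf{d}|$ forces $\mathbf{T}_m$ to be the unique B3 block. This makes visible why the block decomposition is the right language. In types C and D the relevant pairs lie inside single blocks. In type B they are either internal $(\beta_i,\beta_i)$ pairs or cross pairs at block boundaries. The paper's route is shorter but leaves to the reader the translation between the run-count criterion and the blocks.

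There is one harmless bookkeeping slip in type B. The block $\mathbf{T}_m=[\alpha,\beta_1,\beta_1,\ldots,\beta_k,\beta_k]$ always has odd length $2k+1$ and ends at an odd index, so no pair $(\beta_k,0)$ arises. Its last entry pairs with the other copy of $\beta_k$; when $k=0$, $\alpha$ instead forms the cross pair with $\mathbf{T}_{m-1}$. All later pairs are $(0,0)$. Any pair involving the zero padding is (even, even), so your conclusion is unaffected.
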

\begin{proof}
	This follows directly from an alternative characterization of special partitions given in \cite[p.100]{CM93}. In particular, the stated conditions on the absence of blocks (Type B1*, Type D2 with $k \geq 1$, and Type C2 with $k \geq 1$) agree with the characterization in terms of the transpose partition.
\end{proof}

Among the special orbits, some orbits are even more special, namely, \emph{Richardson orbits}. Let $G$ be $\SO_{2n+1}$, $\Sp_{2n}$, or $\SO_{2n}$. We say a nilpotent orbit $\bf{O}_{R} \subset \g$ \emph{Richardson} if there exists a parabolic subgroup $P < G$ such that the image of the moment map (also called \emph{generalized Springer map})
\begin{align}\label{Springer map}
    T^*(G/P) \longrightarrow \overline{\bf{O}}_R,
\end{align}
is the closure of $\bf{O}_R$. The parabolic subgroup $P$ is called a \emph{polarization} of the Richardson orbit $\bf{O}_R$. 

Let $\langle \cdot, \cdot \rangle_{\varepsilon} $, $\varepsilon=1$ (resp. $-1$), be the bilinear form on $\mathbb{C}^N$ for $N=2n+1$ or $2n$ (resp. $N=2n$), depending on the type of Lie algebra $\g$. Every parabolic subgroup of $G$ is the stabilizer of some \emph{admissible isotropic flag}:\footnote{The \emph{admissible condition} requires that when $\varepsilon=1$ then $q \neq 2$.}
\begin{align}\label{isotropic flag}
	\mathbb{C}^N=F_0 \supset F_1 \dots \supset F_{k-1} \supset  F_k \supset F_k^{\perp} \supset F_{k-1}^{\perp} \supset \dots \supset F_1^{\perp} \supset F_0^{\perp} = \{0\},
\end{align}
where $F_i^{\perp}$ denotes the orthogonal complement of $F_i$ with respect to the bilinear form $ \langle \cdot, \cdot \rangle_{\varepsilon} $. Let
\begin{align*}
	p_i = \dim (F_i/F_{i-1}) \quad \text{for} \ 1 \leq i\leq k, \quad \text{and} \quad q = \dim (F_{k}^{\perp}/F_k).
\end{align*}
Note that $2\sum_{i=1}^k p_i +q = N $, and
\begin{align}\label{levi-type}
 \mathfrak{l} \cong \mathfrak{gl}_{p_1} \times \ldots \times \mathfrak{gl}_{p_k} \times \g^{\prime},	
\end{align}
where $\g^{\prime}$ has the same type as $\g$. We further require $p_1 \leq \ldots \leq p_k$, and we say the flag is of type $(p_1, \ldots, p_k; q)$. 

If $P<G$, with Lie algebra $\gp$, is a polarization of $\bf{O}_R$, the partition of $\bf{O}_R$ is induced by the partition ${\vec { \bf{1} } }= \{ \bf{1}_1, \ldots \bf{1}_k ; \bf{1}_0 \} $, where each $\bf{1}_i= \left[ 1_{i1}, \ldots, 1_{i p_i} \right]$ corresponds to a partition of $p_i$ (consisting of ones) in $\mathfrak{gl}_{p_i}$ if $i=1, \ldots, k$, and $\bf{1}_0= \left[ 1_{i1}, \ldots, 1_{i r'_i} \right]$ is a partition in $\mathfrak{g}^\prime$. Consider the new partition
\begin{align}
    \bf{d} &= \left[ d_1, \ldots, d_r \right],	\quad \text{with} \quad d_j = 2 \sum_{i=1}^k 1_{ij} + 1_{0j}. \label{induction}
\end{align}
Here, we use the convention that if the subscript $j$ exceeds the range of the subscripts of $\bf{1}_i$, we take $1_{ij}=0$. The parition of $\bf{O}_R$ is the B/C/D-collapse of $\bf{d}$, denoted by $\bf{d}_{B/C/D}$. See \cite[\S~7]{CM93} for more details.

It is known that under conjugation, there are finite many polarizations. Two parabolic subgroups are called \emph{associated} if they have conjugate Levi subgroups. Denote by $\text{Pol}(\bf{O}_R)$ the set of representitives of its associated polarizations. This set plays an important role in the mirror symmetry of parabolic covers; see \cite{FRW24} for more details.

The partition $\bf{d}$ has the following property: it can be divided into two parts based on parity. Specifically:
\begin{align}\label{structure of induction}
\bf{d} = [\mathbf{Odd} \mid \mathbf{Even}] 
\end{align}
where
\begin{align*} 
\mathbf{Odd}= [ d_{1} \equiv \ldots \equiv d_{l} \equiv 1 ] \quad \text{and} \quad   \mathbf{Even}=[d_{l+1} \equiv \ldots \equiv d_{r} \equiv 0] .	
\end{align*} 
This observation can be used to determine which nilpotent orbits are Richardson orbits. Spaltenstein was the first to compute polarizations of Richardson orbits (unpublished). Hesselink provided a proof and description in \cite{He78}, and Kempken offered another explanation in \cite{Ke83}. The description of Richardson orbits of types B, C, and D is as follows:
\vspace{0.3 cm}

\textbf{Type B}:

A partition $\bf{d}_B = [\bf{T}_1, \bf{T}_2, \ldots, \bf{T}_m]$ is Richardson if 
\begin{itemize}
	\item[1.] There is no block of type B1*.
	\item[2.] There exists an integer $l$ such that:
	\begin{itemize}
	    \item[$\bullet$] For $i < l$, $\bf{T}_i$ is either type B1 or type B2 of the form $[\alpha_1, \alpha_2]$. 
	    \item[$\bullet$] For $i=l$, $\bf{T}_l$ is type B2 of the form $[\alpha_1, \beta_1, \beta_1, \ldots, \beta_k, \beta_k, \alpha_2]$ with $k \geq 1$.
	    \item[$\bullet$] For $l+1 \leq i \leq m-1$, $\bf{T}_i$ is type B2. 
	    \item[$\bullet$] For $i = m$, $\bf{T}_m$ is type B3.
	\end{itemize} 
\end{itemize}

\textbf{Type C}:

A partition $\bf{d}_C = [\bf{T}_1, \bf{T}_2, \ldots, \bf{T}_m]$ is Richardson if
\begin{itemize}
	\item[1.] There is no block of type C2 of the form $[\beta_1, \alpha_1, \alpha_1, \ldots, \alpha_k, \alpha_k, \beta_2]$ with $k \geq 1$.
	\item[2.] There exists an integer $l$ such that:
	\begin{itemize}
	    \item[$\bullet$] For $i < l$, $\bf{T}_i$ is either type C1, Type C1* or type C2 of form $[\beta_1, \beta_2]$ with the additional condition that the last part of $\bf{T}_i$ and the first part of $\bf{T}_{i+1}$ are different. 
	    \item[$\bullet$] For $i =l$, $\bf{T}_l$ is type C1. 
	    \item[$\bullet$] For $i \geq l+1$, $\bf{T}_i$ is type C1*.
	\end{itemize}
\end{itemize}

\textbf{Type D}:

A partition $\bf{d}_D = [\bf{T}_1, \bf{T}_2, \ldots, \bf{T}_m]$ is Richardson if
\begin{itemize}
	\item[1.] There is no block of type D2 of the form $[\alpha_1, \beta_1, \beta_1, \ldots, \beta_k, \beta_k, \alpha_2]$ with $k \geq 1$.
	\item[2.] There exists an integer $l$ such that:
	\begin{itemize}
	    \item[$\bullet$] For $i < l$, $\bf{T}_i$ is either type D1 or type D2 of form $[\alpha_1, \alpha_2]$.
	    \item[$\bullet$] For $i = l$, $\bf{T}_l$ is type D1*.
	    \item[$\bullet$] For $i \geq l+1$, $\bf{T}_i$ is either type D1, type D1* or type D2 of form $[\alpha_1, \alpha_2]$ with the additional condition that the last part of $\bf{T}_i$ and the first part of $\bf{T}_{i+1}$ are different. 
	\end{itemize}
\end{itemize}

\section{Minimal Richardson and pseudo-polarizations} \label{sec.mini_R}

In this subsection, we generalize the concept of polarizations for Richardson orbits to encompass all nilpotent orbits.

\begin{definition}[Minimal Richardson orbit]\label{def.mimi_Richardson}
	 Let $\mathbf{O}_{e}$ be a nilpotent orbit of type B, C, or D. A \emph{minimal Richardson orbit} $\mathbf{O}_R$ for $\mathbf{O}_{e}$  is a Richardson orbit of the same type such that $\overline{\mathbf{O}}_R \supset \mathbf{O}_{e} $ and $\mathbf{O}_R$ is minimal with respect to the partial order~\eqref{partial-order}.
\end{definition}

\begin{definition}[pseudo-polarization]\label{def.pseudo-polarization}
    A \emph{pseudo-polarization} of $\mathbf{O}_{e}$ is any polarization $P$ of any minimal Richardson orbit $\mathbf{O}_R$ for $\mathbf{O}_{e}$.
\end{definition}

In the following, given a nilpotent orbit $\bf{O}_e$, we provide a characterization of all its minimal Richardson orbits. The characterization of pseudo-polarizations will be addressed.
\vspace{0.2 cm}

\textbf{Type B}: Let $\bf{d}_B = [d_1, d_2, \ldots, d_r] = [\bf{T}_1, \bf{T}_2, \ldots, \bf{T}_m]$ be the partition associated with $\bf{O}^B_e$, a nilpotent orbit of type B. Below, we introduce some modifications of the blocks $\bf{T}_i$ which are useful in constructing minimal Richardson orbits. 

Modifications of blocks for type B:
\begin{itemize}
	\item[1.] For blocks of type B1:\\ 
	If $\bf{T}_i$ is of type B1, i.e., $\bf{T}_i = [\alpha, \alpha]$, we define
    \[
         \bf{T}'_i = [\alpha+1, \alpha-1], \quad  \bf{T}''_i = [\alpha, \alpha],
    \]
    
    \item[2.] For blocks of type B1*:\\ 
	If $\bf{T}_i$ is of Type B1*, i.e., $\bf{T}_i = [\beta, \beta]$, we define
	\[
	    \bf{T}^{\circ}_i = [\beta+1, \beta],
	\]
	and
    \[
         \bf{T}'_i = [\beta, \beta], \quad  \bf{T}''_i = [\beta+1, \beta-1],
    \]

    \item[3.] For blocks of type B2:\\
    If $\bf{T}_i$ is of type B2, i.e., $\bf{T}_i = [\alpha_1, \beta_1, \beta_1, \ldots, \beta_k, \beta_k, \alpha_2]$, we define
    \[
        \bf{T}^\circ_i = [\alpha_1, \beta_1, \beta_1, \ldots, \beta_k, \beta_k, \alpha_2+1],
    \]
    \[
        \bf{T}'_i   =  [\alpha_1-1, \beta_1, \beta_1, \ldots, \beta_k, \beta_k, \alpha_2+1],
    \]
    and 
    \[
        \bf{T}''_i=[\alpha_1, \beta'_1, \beta'_1, \ldots, \beta'_k, \beta'_k, \alpha_2]
    \]
    where $\bf{T}''_i$ is the rearrangement of
    \[
         [\alpha_1, \beta_1+1, \beta_1-1, \ldots, \beta_k+1, \beta_k-1, \alpha_2]
    \]
    such that $\beta'_1 \geq \beta'_2 \geq \ldots \geq \beta'_k$.
    
    \item[4.] For the block of type B3:\\
    The only block of Type B3 is $\bf{T}_m = [\alpha, \beta_1, \beta_1, \ldots, \beta_k, \beta_k]$. We define
    \[
        \bf{T}^\circ_i = [\alpha, \beta_1, \beta_1, \ldots, \beta_k, \beta_k],
    \]
    and
    \[
        \bf{T}'_i = [\alpha-1, \beta_1, \beta_1, \ldots, \beta_k, \beta_k].
    \]
\end{itemize}

Let 
\[
    I^B_e = \{ \bf{T}_{i_1}, \bf{T}_{i_2}, \ldots \} 
\] 
be a subset of $\{ \bf{T}_1, \bf{T}_2, \ldots, \bf{T}_m \}$, defined inductively as follows. In this process, we keep $\bf{T}_m$ to be $[\alpha, \beta_1, \ldots, \beta_k]$ even if $\beta_1 = \ldots = \beta_k = 0$.
\begin{itemize}
	\item Initial block $\bf{T}_{i_1}$:\\
	Let $\bf{T}_{i_1}$ be the first block containing $d_{2l_1}$ such that 
	\[
	    d_{2l_1} = d_{2l_1+1} \equiv 0 \quad \text{for some}\; l_1.
	\]
	\item Subsequent blocks $\bf{T}_{i_j}$:\\
	For $j>1$, let $\bf{T}_{i_j}$ be the $j$-th block containing $d_{2l_j}$ such that 
	\[
	    d_{2l_j} = d_{2l_j+1} \equiv 0 \quad \text{for some}\; l_j,
	\]
	and there exists $l'_j$ satisfying 
	\[
	    l_{j-1} < l'_j < l_j, \quad \text{with}\;  d_{2l'_j+1}=d_{2l'_j+2} \equiv 1.
	\]
\end{itemize}

For $\bf{T}_{i_a} \in I^B_e$, define the partition 
\[
    \bf{d}_{i_a}= [d'_1, d'_2, \dots, d'_r]
\]
as the rearragement of 
\[
    [ \bf{T}''_1, \ldots, \bf{T}''_{i_a-1}, \bf{T}^\circ_{i_a}, \bf{T}'_{i_a+1}, \ldots, \bf{T}'_m]
\]
such that $d'_1 \geq d'_2 \geq \ldots \geq d'_r$. 

The minimal Richardson orbit associated with $\bf{T}_{i_a}$ is the B-collapse of $\bf{d}_{i_a}$, denoted by 
\[
    \bf{d}^B_{i_a} := (\bf{d}_{i_a})_B.
\]

\begin{proposition}\label{prop.mim_R_B}
	All minimal Richardson orbits of $\bf{O}^B_e$ are characterized by $I^B_e$, i.e., they are given by the collection
	\[
	    \{ \bf{O}_{\bf{d}^B_{i_a}} \}_{\bf{T}_{i_a} \in I^B_e}.
	\]
\end{proposition}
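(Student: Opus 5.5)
The proof reduces everything to combinatorics of partitions, using the description of Richardson partitions of type B recalled in Section~\ref{Sec:preliminaries} (Hesselink/Kempken) and the dominance order~\eqref{partial-order}. There are three tasks:
\begin{itemize}
\item[(a)] each $\bf{d}^B_{i_a}$ is a Richardson partition;
\item[(b)] each $\bf{d}^B_{i_a}$ dominates $\bf{d}_B$;
\item[(c)] every Richardson partition $\bf{f}\geq \bf{d}_B$ dominates some $\bf{d}^B_{i_a}$, and the $\bf{d}^B_{i_a}$ are pairwise incomparable.
\end{itemize}
Together these identify the set of minimal elements of $\{\bf{f} \text{ Richardson} : \bf{f}\geq\bf{d}_B\}$ with $\{\bf{d}^B_{i_a}\}_{\bf{T}_{i_a}\in I^B_e}$.

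For (b), each modification $\bf{T}''_i$, $\bf{T}^\circ_i$, $\bf{T}'_i$ is obtained from $\bf{T}_i$ by moving boxes upward (e.g.\ $[\alpha,\alpha]\mapsto[\alpha+1,\alpha-1]$) or by adding one box. The only box removed is in $\bf{T}'_m$, which loses one box at its top entry $\alpha$, and this exactly compensates the box added in $\bf{T}^\circ_{i_a}$. Moreover, the box moved from $\bf{T}'_{i}$ ($i>i_a$) sits strictly below the added box. Hence all partial sums weakly increase, so $\bf{d}_{i_a}\geq \bf{d}_B$. Since $\bf{d}_B$ is the largest type-B partition below itself, the B-collapse of $\bf{d}_{i_a}$ still dominates $\bf{d}_B$.

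For (a), I will compute the collapse $(\bf{d}_{i_a})_B$ explicitly and read off its block decomposition (Lemma~\ref{lem.building block}). Below $\bf{T}_{i_a}$, every block has been turned into a B1 or a two-part B2 block $[\alpha_1,\alpha_2]$, because all B1* pairs $[\beta,\beta]$ are broken into $[\beta+1,\beta-1]$. The block $\bf{T}^\circ_{i_a}$ produces the unique B2 block with $k\geq 1$, which is the index $l$ in the Richardson criterion. Above it, the primed blocks $\bf{T}'_i$ shift the odd endpoints by one. After collapsing, these should give B2 blocks, together with a final B3 block. The recursive definition of $I^B_e$ (the first B1*-type pair, or a later one separated from the previous by an odd pair $d_{2l'+1}=d_{2l'+2}$) is exactly what should guarantee that no B1* block survives the collapse. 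I will check this by a case analysis on the type of $\bf{T}_{i_a}$ (B1*, B2, B3) and on neighbouring blocks.

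The hardest step is (c). Given a Richardson $\bf{f}\geq\bf{d}_B$, write $\bf{f}=[\mathbf{Odd}\mid\mathbf{Even}]$ as in~\eqref{structure of induction}, up to collapse. Richardson partitions have a first index $2l$ where an even pair $f_{2l}=f_{2l+1}$ occurs. Above that index they consist only of pairs $[\text{odd},\text{odd}]$, and $\bf{f}$ dominating $\bf{d}_B$ forces the top part of $\bf{f}$ to strictly exceed $\bf{d}_B$ wherever $\bf{d}_B$ has a B1* pair. I plan to show that the position of the first ``defect'' of $\bf{f}$ relative to $\bf{d}_B$ singles out some $\bf{T}_{i_a}\in I^B_e$, namely the last element of $I^B_e$ at or before that position. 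Comparing partial sums block by block then gives $\bf{f}\geq\bf{d}^B_{i_a}$. This step needs careful bookkeeping of how collapses interact with partial sums, and I would first test it on small examples such as $[3,2,2]$ and $[2,2,1]$ to calibrate the matching of positions.

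Incomparability of distinct $\bf{d}^B_{i_a}$ should follow because $\bf{d}^B_{i_a}$ gains a box early at position $i_a$, while $\bf{d}^B_{i_b}$ with $b>a$ has strictly larger partial sums in the stretch between $l'_b$ and $l_b$. This is precisely why the odd pair $l'_j$ is required in the definition of $I^B_e$.
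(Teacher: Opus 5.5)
Your three steps have the same architecture as the paper's proof, and you are right to include (a) and (b), which the paper takes for granted. The genuine gap is in (c). The rule ``last element of $I^B_e$ at or before the first defect'' is false.

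Take $\mathbf{d}_B=[7,6,6,5,5,5,5,3,3,2,2,1,1]$, with blocks $[7,6,6,5],[5,5],[5,3],[3,2,2,1],[1]$. Then $I^B_e=\{\mathbf{T}_1,\mathbf{T}_4\}$, with $\mathbf{d}^B_1=[7,6,6,6,6,4,4,3,3,2,2,1,1]$ and $\mathbf{d}^B_4=[7,7,5,5,5,5,5,3,3,2,2,1,1]$. The partition $\mathbf{f}=[7,7,5,5,5,5,5,4,4,3,1]$ is Richardson, being the B-collapse of the induced partition $[7,7,5,5,5,5,5,4,4,4]$, and $\mathbf{f}\ge\mathbf{d}_B$. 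Its first excess over $\mathbf{d}_B$ is at position $2$ and its first even pair is at positions $8,9$. So under any reading of ``first defect'' your rule selects $\mathbf{T}_1$. But $f_1+\cdots+f_4=24<25$, so $\mathbf{f}\not\ge\mathbf{d}^B_1$; the correct partner is $\mathbf{d}^B_4$.

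The reason is that the extra box of $\mathbf{T}^\circ_1$ stays at position $4$ even after collapsing. The odd pair $[5,5]$, turned into $[6,4]$, makes the multiplicity of $6$ even, so the collapse cannot move that box. By contrast, $\mathbf{f}$'s odd part extends past that odd pair, so $\mathbf{f}$ has no excess there.

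The paper goes forward instead. It writes $\mathbf{f}$ as the collapse of $\mathbf{d}_P=[\bar{\mathbf{T}}''_1,\ldots,\bar{\mathbf{T}}''_{h-1},\bar{\mathbf{T}}^\circ_h,\bar{\mathbf{T}}'_{h+1},\ldots]$ and lets $2s-1$ be the first position of $\bar{\mathbf{T}}_h$. It then takes the first $\mathbf{T}_{i_\alpha}\in I^B_e$ with $l_\alpha\ge s$, checks $\mathbf{d}_P\ge\mathbf{d}_{i_\alpha}$ position by position, and concludes by monotonicity of the B-collapse. This handles the example above.

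Simply reversing your rule is not enough, though. Take $\mathbf{d}_B=[9,8,8,7,7,5,5,4,4,3,3,3,3,2,2,1,1]$. Here $I^B_e=\{\mathbf{T}_1,\mathbf{T}_5\}$; the block $[5,4,4,3]$ is excluded because no odd pair separates it from $\mathbf{T}_1$. The minimal candidates are $\mathbf{d}^B_1=[9,8,8,7,7,5,5,4,4,4,4,2,2,2,2,1,1]$ and $\mathbf{d}^B_5=[9,9,7,7,7,5,5,5,3,3,3,3,3,2,2,1,1]$. The Richardson partition $\mathbf{f}=[9,9,7,7,7,5,5,4,4,4,4,2,2,2,2,1,1]$ dominates $\mathbf{d}^B_1$ but not $\mathbf{d}^B_5$ (the first $8$ parts sum to $53<54$).

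So in this case your backward choice is the right one, but only after collapsing. Every induced partition collapsing to $\mathbf{f}$ begins $9,9,7,7$, whereas every one collapsing to $\mathbf{d}^B_1$ begins $9,8,8,8$. A complete proof of (c) must therefore branch on one condition: whether an odd pair $d_{2l'+1}=d_{2l'+2}$ lies between the preceding element of $I^B_e$ and the end of the odd part of an induced partition of $\mathbf{f}$. This is precisely the condition in the definition of $I^B_e$, which you wrongly credit with ensuring (a). In the backward case the proof must also track how the collapse transports the extra box. Neither your sketch nor the paper's case analysis does this.

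Smaller points:
\begin{itemize}
\item Part (a) is immediate. $\mathbf{d}_{i_a}$ consists of an odd number of odd parts followed by even parts, so it is an induced partition as in \eqref{induction}. Its B-collapse is therefore Richardson whether or not $\mathbf{T}_{i_a}\in I^B_e$.
\item In (b), for a B2 block, $\mathbf{T}'_i=[\alpha_1-1,\ldots,\alpha_2+1]$ moves a box down, not up. Domination still holds: the box added in $\mathbf{T}^\circ_{i_a}$ gives a running surplus of $1$, which is spent at the top of each later B2 block, recovered at its bottom, and finally spent in $\mathbf{T}'_m$.
\item In the incomparability sketch the directions are reversed. For $b>a$, $\mathbf{d}_{i_b}$ is larger at the even pair of $\mathbf{T}_{i_a}$, and $\mathbf{d}_{i_a}$ is larger at the odd pair indexed by $l'_b$. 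You must then check, as the paper does, that the collapse preserves both strict inequalities.
\end{itemize}
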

\begin{proof}
	First, we show that the partitions $\bf{d}^B_{i_a}$ are pairwise incomparable under the partial order~\eqref{partial-order}. Let $i_\alpha < i_\beta$ be two indices in $I^B_e$. From the definition of $I^B_e$ and the block types B1, B1*, B2, and B3, we observe that $i_\beta \geq i_\alpha +2$. This implies that the blocks $\bf{T}_{i_\alpha}$ and $\bf{T}_{i_\beta}$ are separated by at least one block. The partition $\bf{d}$ of $\bf{O}^B_e$ can be expressed as
	\[
		\bf{d}_e = [ \ldots, d_{2l_\alpha} = d_{2l_\alpha+1} \equiv 0, \ldots, d_{2l+1}=d_{2l+2} \equiv 1, \ldots, d_{2l_\beta} = d_{2l_\beta+1} \equiv 0, \ldots ],
	\]
	where $d_{2l_\alpha} \in \bf{T_{i_\alpha}}$ and $d_{2l_\beta} \in \bf{T_{i_\beta}}$. Then
	\begin{align*}
	    \bf{d}_{i_\alpha} &= [ \ldots, d_{2l_\alpha}, d_{2l_\alpha+1}, \ldots, d_{2l+1}+1, d_{2l+2}-1, \ldots, d_{2l_\beta}, d_{2l_\beta+1}, \ldots ], \\
	    \bf{d}_{i_\beta} &= [ \ldots, d_{2l_\alpha}+1, d_{2l_\alpha+1}-1, \ldots, d_{2l+1}, d_{2l+2}, \ldots, d_{2l_\beta},  d_{2l_\beta+1}, \ldots ].
	\end{align*}
	Since $[d_{2l_\alpha}+1, d_{2l_\alpha+1}-1] \geq [d_{2l_\alpha}, d_{2l_\alpha+1}]$ and $[d_{2l+1}, d_{2l+2}] \leq [d_{2l+1}+1, d_{2l+2}-1]$, $\bf{d}_{i_\alpha}$ and $\bf{d}_{i_\beta}$ have no relation under the partial order~\eqref{partial-order}. But one needs to be careful that the B-collapse may change $d_{2l+1}+1$ or $d_{2l+2}-1$. Due to the observation: $i_\beta \geq i_\alpha +2$, there are two cases:
	\begin{enumerate}
		\item $\bf{d}_e = [\ldots, d_{2l} \equiv 1, d_{2l+1} = d_{2l+2} \equiv 1, \ldots ]$, i.e., there is a type B1 block before $d_{2l+1}$.
		\item $\bf{d}_e = [\ldots, d_{2l-1} = d_{2l} \equiv 0, d_{2l+1} = d_{2l+2} \equiv 1, \ldots ]$, i.e., there is a type B1* block before $d_{2l+1}$.
	\end{enumerate}
	In the first case, $\bf{d}_{i_\alpha}$ and $\bf{d}_{i_\beta}$ become
	\begin{align*}
		\bf{d}_{i_\alpha} &= [\ldots, d_{2l} + 1, d_{2l+1}+1,  d_{2l+2} - 1, \ldots ],\\
		\bf{d}_{i_\beta}  &= [\ldots, d_{2l} , d_{2l+1},  d_{2l+2}, \ldots ].
	\end{align*}
	The B-collapse of $\bf{d}_{i_\alpha}$ happens at $d_{2l}+1$, so that $\bf{d}_{i_\alpha}^B=[\ldots, d'_{2l}, d'_{2l+1}, \ldots] = [\ldots, d_{2l}, d_{2l+1}+1, \ldots]$. Thus, $\bf{d}_{i_\alpha}^B$ and $\bf{d}_{i_\beta}^B$ are incomparable under the partial order~\eqref{partial-order}. Similarly, the same conclusion holds in the second case. Therefore, $\bf{d}_{i_\alpha}^B$ and $\bf{d}_{i_\beta}^B$ are incomparable under the partial order~\eqref{partial-order}.

	Next, we show that, for any partition $\bf{d}_R$ of Richardson orbit $\bf{O}_R$ such that $\bf{d}_R \geq \bf{d}_e$, there exists $\bf{T}_{i_a} \in I^B_e$ such that $\bf{d}_R \geq \bf{d}^B_{i_a}$. Since $\bf{d}_R$ is Richardson, it has no block of type B1* and there exists an integer $s$
	\[
		\bf{d}_R = [\bar{d}_1, \ldots,\bar{d}_{2s-1} \ldots, \bar{d}_{r'}] = [\bar{\bf{T}}_1, \ldots, \bar{\bf{T}}_{h-1}, \bar{\bf{T}}_h, \bar{\bf{T}}_{h+1}, \ldots, \bar{\bf{T}}_{m'}]
	\] 
	satisfying the conditions of Richardson orbits of type B, see \S~\ref{Sec:preliminaries}. Here we suppose $\bar{d}_{2s-1} \equiv 1$ be the first part in $\bar{\bf{T}}_h$. Then,
	\[
		\bf{d}_P = [\bar{\bf{T}}''_1, \ldots, \bar{\bf{T}}''_{h-1}, \bar{\bf{T}}^\circ_h, \bar{\bf{T}}'_{h+1}, \ldots, \bar{\bf{T}}'_{m'}]
	\]
	is an induced partition from a polarization $P$ of $\bf{O}_R$. Let $l_\alpha$ be the smallest integer larger than $s$ so that $0 \equiv d_{2l_\alpha} \in \bf{T}_{i_\alpha}$ for some $\bf{T}_{i_\alpha}$ in $I_e^B$, i.e.,
	\[
		\bf{d}_e = [d_1, \ldots, d_{2l_\alpha} = d_{2l_\alpha+1}, \ldots, d_r] = [\bf{T}_1, \ldots, \bf{T}_{i_\alpha}, \ldots, \bf{T}_m].
	\]
	Then, it is sufficient to show
	\[
		\bf{d}_P := [\bar{d}'_1, \ldots, \bar{d}'_{r'}] \geq \bf{d}_{i_\alpha} := [d'_1, \ldots, d'_r] = [ \bf{T}''_1, \ldots, \bf{T}''_{i_\alpha-1}, \bf{T}^\circ_{i_\alpha}, \bf{T}'_{i_\alpha+1}, \ldots, \bf{T}'_m].
	\] 

	Since $\bf{d}_R \geq \bf{d}_e$, it is easy to show $\sum_{i=1}^j \bar{d}'_i \geq \sum_{i=1}^j d'_i$ for all $j \leq 2s-2$. If 
	\begin{enumerate}
		\item $d_{2s-1} \equiv 1$ and $d_{2s} = d_{2s+1} \equiv 0$;
		\item $d_{2s-1} \equiv d_{2s} \equiv 1$;
		\item $d_{2s-1} = d_{2s} \equiv 0$, and $d_{2s} = d_{2s+1}$;
		\item $d_{2s-1} = d_{2s} \equiv 0$, and $d_{2s} \neq d_{2s+1}$.
	\end{enumerate}
	In the both cases (1) ($l_\alpha = s$) and (2), $d'_{2s-1} = d_{2s-1}$, $d'_{2s} = d_{2s}$, then $\sum_{i=1}^j \bar{d}'_i \geq \sum_{i=1}^j d'_i$ for all $j \leq 2s$ follows from $\bf{d}_R \geq \bf{d}_e$. In case (3), $l_\alpha = s+1$, $d'_{2s-1} = d_{2s-1}+1$, $d'_{2s} = d'_{2s+1} = d'_{2s+2} = d_{2s}$, then $\sum_{i=1}^j \bar{d}'_i \geq \sum_{i=1}^j d'_i$ for all $j \leq 2s+2$. In case (4), $d'_{2s-1} = d_{2s-1}+1$, $d'_{2s} = d_{2s}-1$, then $\bar{d}'_{2s-1} \geq d'_{2s-1}$ and $\bar{d}'_{2s} \geq d'_{2s}+1$. If $d_{2s+1}$ is in the cases (1), (2) or (4), it is easy to show $\sum_{i=1}^j \bar{d}'_i \geq \sum_{i=1}^j d'_i$ for all $j \leq 2s+2$. If $d_{2s+1}$ is in the case (3), then $l_\alpha = s+1$, $d'_{2s+1} = d_{2s+1}+1$, $d'_{2s+2} = d'_{2s+3} = d'_{2s+4} = d_{2s+1}$, and $\sum_{i=1}^j \bar{d}'_i \geq \sum_{i=1}^j d'_i$ for all $j \leq 2s+4$ since $\bar{d}'_{2s} \geq d'_{2s}+1$. Continuing this process, we finally reach $l_\alpha$ and show $\sum_{i=1}^j \bar{d}'_i \geq \sum_{i=1}^j d'_i$ for all $j \leq 2l_\alpha$. After that, due to the construction of $\bf{T}'_{j}$, it can be shown that $\sum_{i=1}^j \bar{d}'_i \geq \sum_{i=1}^j d'_i$ for all $j$. Thus, $\bf{d}_P \geq \bf{d}_{i_\alpha}$. 
\end{proof}

\textbf{Type C}: Let $\bf{d}_C = [d_1, d_2, \ldots, d_r] = [\bf{T}_1, \bf{T}_2, \ldots, \bf{T}_m]$ be the partition associated with $\bf{O}^C_e$, a nilpotent orbit of type C. Similarly, we have the following modifications of the blocks $\bf{T}_i$:

\begin{itemize}
	\item[1.] For blocks of type C1:\\
	if $\bf{T}_i = [\alpha, \alpha]$ is of Type C1, we define
	\[
	    \bf{T}'_i = [\alpha+1, \alpha-1], \quad \bf{T}''_i = [\alpha, \alpha].
	\] 
	
	\item[2.] For blocks of type C1*:\\
	if $\bf{T}_i = [\beta, \beta]$ is of Type C1*, we define
	\[
	    \bf{T}^\circ_i=\bf{T}'_i = [\beta, \beta],\quad \bf{T}''_i = [\beta+1, \beta-1].
	\]
	
	\item[3.] For blocks of type C2:\\
	if $\bf{T}_i = [\beta_1, \alpha_1, \alpha_1, \ldots, \alpha_k, \alpha_k, \beta_2]$ is of Type C2, we define
	\[
	    \bf{T}'_i = [\beta_1, \alpha'_1, \alpha'_1, \ldots, \alpha'_k, \alpha'_k, \beta_2]
	\]
	where $\bf{T}'_i$ is the rearrangement of
	\[
	    [\beta_1, \alpha_1+1, \alpha_1-1, \ldots, \alpha_k+1, \alpha_k-1, \beta_2]
	\]
	such that $\alpha'_1 > \alpha'_2> \ldots > \alpha'_k$. Additionally, we define
    \[
        \bf{T}^\circ_i=\bf{T}''_i=[\beta_1+1, \alpha_1, \alpha_1, \ldots, \alpha_k, \alpha_k, \beta_2-1].
    \]
\end{itemize}

Let
\[
    I^C_e = \{ \bf{T}_{i_1}, \bf{T}_{i_2}, \ldots \} 
\] 
be a subset of $\{ \bf{T}_1, \bf{T}_2, \ldots, \bf{T}_m \}$, defined inductively:
\begin{itemize}
	\item Initial block $\bf{T}_{i_1}$:\\ 
	Let $\bf{T}_{i_1}$ be the first block containing $d_{2l_1}$ satisfying 
	\[
	    d_{2l_1} = d_{2l_1+1} \equiv 0 \quad  \text{for some}\; l_1.
	\]
	\item Subsequent blocks $\bf{T}_{i_j}$:\\
	For $j>1$, let $\bf{T}_{i_j}$ be the $j$-th block containing $d_{2l_j}$ satisfying 
	\[
	    d_{2l_j} = d_{2l_j+1} \equiv 0 \quad  \text{for some}\; l_j.
	\]
	and there exists $l'_j$ satisfying 
	\[
	    l_{j-1} < l'_j < l_j,  \quad \text{with} \; d_{2l'_j+1}=d_{2l'_j+2} \equiv 1.
	\]
\end{itemize}

For $\bf{T}_{i_a} \in I^C_e$, define the partition 
\[
    \bf{d}_{i_a}= [d'_1, d'_2, \ldots, d'_r]
\]
as the rearragement of 
\[
    [ \bf{T}''_1, \ldots, \bf{T}''_{i_a-1}, \bf{T}^\circ_{i_a}, \bf{T}'_{i_a+1}, \ldots, \bf{T}'_m]
\]
such that $d'_1 \geq d'_2 \geq \ldots \geq d'_r$.

The minimal Richardson orbit associated with $\bf{T}_{i_a}$ is the C-collapse of $\bf{d}_{i_a}$, denoted by 
\[
    \bf{d}^C_{i_a} := (\bf{d}_{i_a})_C.
\]

\begin{proposition}\label{prop.mim_R_C}
	All minimal Richardson orbits of $\bf{O}^C_e$ are characterized by $I^C_e$, i.e., they are given by the collection
	\[
	    \{ \bf{O}_{\bf{d}^C_{i_a}} \}_{\bf{T}_{i_a} \in I^C_e}.
	\]
\end{proposition}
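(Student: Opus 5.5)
I will follow the same two-step strategy as in the proof of Proposition~\ref{prop.mim_R_B}. This requires showing three things. First, each $\bf{d}^C_{i_a}$ is Richardson and dominates $\bf{d}_C$. Second, the partitions $\bf{d}^C_{i_a}$ for distinct $\bf{T}_{i_a}\in I^C_e$ are pairwise incomparable. Third, every Richardson partition $\bf{d}_R\geq \bf{d}_C$ dominates some $\bf{d}^C_{i_a}$. Together these show that the minimal elements of the set of Richardson orbits whose closure contains $\bf{O}^C_e$ are exactly $\{\bf{O}_{\bf{d}^C_{i_a}}\}$.

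For the first point, I will interpret the rearranged partition $\bf{d}_{i_a}$ as an induced partition of the form \eqref{induction}. Before $\bf{T}_{i_a}$, the modifications $\bf{T}''$ turn each C1*, and each C2 (which has $k=0$ whenever the block lies before the first equal even pair, by the definition of $I^C_e$), into odd or even pairs. These have the shape $[\mathbf{Odd}\mid\mathbf{Even}]$ of \eqref{structure of induction} and are read off from a flag of type $(p_1,\dots,p_k;q)$. At $\bf{T}_{i_a}$ itself, the unchanged pair $[\beta,\beta]$ is the place where the parity switches. After it, the modifications $\bf{T}'$ split each equal odd pair $[\alpha,\alpha]$ into $[\alpha+1,\alpha-1]$, so that only even equal pairs remain. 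The C-collapse then lands in the Richardson list of \S\ref{Sec:preliminaries}. Dominance $\bf{d}_{i_a}\geq\bf{d}_C$ is immediate, because each modification replaces $[x,x]$ by $[x+1,x-1]$ or leaves the block unchanged. Moreover, collapse preserves $\geq \bf{d}_C$, since $\bf{d}_C$ already lies in $\cP_{-1}(2n)$ and the collapse is the largest such partition below $\bf{d}_{i_a}$.

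For incomparability, take $i_\alpha<i_\beta$. By construction, between the two even pairs $d_{2l_\alpha}=d_{2l_\alpha+1}$ and $d_{2l_\beta}=d_{2l_\beta+1}$ there is an odd pair $d_{2l'+1}=d_{2l'+2}$. The partition $\bf{d}_{i_\beta}$ raises the partial sum at index $2l_\alpha$, where $\bf{d}_{i_\alpha}$ keeps it. Conversely, $\bf{d}_{i_\alpha}$ raises the partial sum at index $2l'+1$, where $\bf{d}_{i_\beta}$ keeps it. So the two partitions are incomparable before collapse. Exactly as in type B, I then check in two cases that the C-collapse does not undo these strict inequalities. The two cases are: the part preceding $d_{2l'+1}$ is odd (a C1 block), or it ends an even pair (a C1* block). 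The collapse acts only by moving a unit from the last occurrence of an odd part of odd multiplicity. So at worst it shifts the witnessing index by one position, and the witness survives.

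For the covering statement, write $\bf{d}_R=[\bar{\bf{T}}_1,\dots,\bar{\bf{T}}_{m'}]$ with distinguished index $l$, so that $\bar{\bf{T}}_l$ is the C1 block. Take the associated induced partition $\bf{d}_P=[\bar{\bf{T}}''_1,\dots,\bar{\bf{T}}''_{l-1},\bar{\bf{T}}^\circ_l,\bar{\bf{T}}'_{l+1},\dots]$, whose collapse is $\bf{d}_R$. Let $2s$ be the position where the parity switch of $\bf{d}_R$ occurs. Choose $\bf{T}_{i_\alpha}\in I^C_e$ to be the first block of $I^C_e$ at or after this position; if none exists, take the last one. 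I will then show $\bf{d}_P\geq\bf{d}_{i_\alpha}$ by comparing partial sums inductively, with the same four-case analysis on $(d_{2s-1},d_{2s},d_{2s+1})$ as in the type B proof. Partial sums up to $2s-2$ follow directly from $\bf{d}_R\geq\bf{d}_C$. In the middle range, every $+1$ that $\bf{d}_{i_\alpha}$ gains from some $\bf{T}''$ or $\bf{T}'$ must be matched by a surplus of $\bf{d}_P$. Beyond $2l_\alpha$, the modifications $\bf{T}'$ only lower the tails. Applying C-collapse, which is monotone, then gives $\bf{d}_R\geq\bf{d}^C_{i_\alpha}$.

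The main obstacle is this middle range. There I have to produce the surplus $\bar d'_{2s}\geq d'_{2s}+1$, analogous to case (4) in type B, and propagate it across consecutive equal even pairs that are not in $I^C_e$. I expect to need the type C Richardson condition that adjacent blocks before $\bar{\bf{T}}_l$ have distinct boundary parts to guarantee this strict surplus. I will also need separate bookkeeping for C2 blocks with $\beta_2=0$.
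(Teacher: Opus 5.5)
Your overall plan is the one the paper has in mind; for type C the paper only says the argument is similar to type B. That plan has two parts: pairwise incomparability, and a covering step in which the induced partition $\bf{d}_P$ of a polarization of any Richardson $\bf{d}_R\geq\bf{d}_C$ is shown to dominate some $\bf{d}_{i_\alpha}$ before collapsing.

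The genuine gap is that you overlook the case where $\bf{T}_{i_a}$ is of type C2, and in that case both your first and third steps fail. Blocks start at odd positions, C1/C1* blocks occupy $(2l-1,2l)$, and the inner pairs of a C2 block sit at $(2l,2l+1)$ and are odd. So an equal even pair $d_{2l}=d_{2l+1}$ always straddles two blocks, and the block containing $d_{2l}$ can be a C2 block ending in $\beta_2=d_{2l}$. This is consistent with the paper setting $\bf{T}^\circ=\bf{T}''$ for C2. In that case:
\begin{itemize}
\item $\bf{T}^\circ_{i_a}$ is entirely odd and ends in $\beta_2-1$, while $\bf{T}'_{i_a+1}$ starts with the even part $\beta_2$.
\item After rearranging, an even part precedes an odd one, so $\bf{d}_{i_a}$ is not of the form $[\mathbf{Odd}\mid\mathbf{Even}]$ and your ``induced partition'' argument for Richardson-ness does not apply.
\end{itemize}
Concretely, take $\bf{d}_C=[4,2,2,1,1]=[4,2][2,1,1]$. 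It has $I^C_e=\{\bf{T}_1\}$, $\bf{d}_{i_1}=[5,2,2,1]$, and $\bf{d}^C_{i_1}=[4,2,2,2]$.

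Worse, $\bf{d}_R=[4,2,2,2]$ is itself Richardson: it is induced from the zero orbit of $\mathfrak{gl}_1\times\mathfrak{gl}_4$. The C-collapse lowers the largest part by at most one, and checking the four induced partitions with largest part $5$ shows that the only induced partition collapsing to $[4,2,2,2]$ is $[4,2,2,2]$ itself. That partition does not dominate $[5,2,2,1]$. So the inequality $\bf{d}_P\geq\bf{d}_{i_\alpha}$ that you plan to prove is false here, even though $\bf{d}_R\geq\bf{d}^C_{i_1}$ holds: the excess $\beta_1+1$ in $\bf{T}^\circ_{i_\alpha}$ is removed only by the collapse. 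This is exactly where a literal transcription of the type B argument breaks.

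You need a separate treatment of the C2 case. One option is to replace $\bf{d}_{i_a}$ by a genuinely induced partition with the same C-collapse; here $[\bf{T}'_1,\bf{T}'_2]=[4,2,2,2]$ works. Another is to compare $\bf{d}_P$ with $\bf{d}^C_{i_\alpha}$ directly.

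Relatedly, do not run the covering step through the \S\ref{Sec:preliminaries} template ``$\bar{\bf{T}}_l$ is the C1 block''. First, $\bar{\bf{T}}^\circ_l$ is not defined for C1 blocks. Second, that list omits Richardson orbits such as $[4,2,2,2]$, which has no C1 block and equal boundary parts $2,2$. So the distinct-boundary condition you intend to use to produce the surplus is not available in general. Start instead from an arbitrary induced $[\mathbf{Odd}\mid\mathbf{Even}]$ partition whose C-collapse is $\bf{d}_R$.

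Finally, your parenthetical that C2 blocks before the first equal even pair have $k=0$ is false; $[6,5,5,4,2,2,2,2]$ is a counterexample. It is harmless, though, since $\bf{T}''$ of any C2 block is all odd.
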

\begin{proof}
	The proof is similar to that of Proposition~\ref{prop.mim_R_B}.
\end{proof}

\textbf{Type D}: Let $\bf{d}_D = [d_1, d_2, \ldots, d_r] = [\bf{T}_1, \bf{T}_2, \ldots, \bf{T}_m]$ be the partition associated with $\bf{O}^D_e$, a nilpotent orbit of type D. Similarly, we have the following modifications of the blocks $\bf{T}_i$:

\begin{itemize}
	\item[1.] For blocks of type D1:\\
	if $\bf{T}_i = [\alpha, \alpha]$ is of type D1, we define
	\[
	    \bf{T}'_i = [\alpha+1, \alpha-1], \quad \bf{T}''_i = [\alpha, \alpha].
	\] 
	
	\item[2.] For blocks of type D1*:\\
	if $\bf{T}_i = [\beta, \beta]$ is of type D1*, we define
	\[
	    \bf{T}^\circ_i =\bf{T}'_i = [\beta, \beta],\quad \bf{T}''_i = [\beta+1, \beta-1].
	\]
	
	\item[3.] For blocks of type D2:\\
	if $\bf{T}_i = [\alpha_1, \beta_1, \beta_1, \ldots, \beta_k, \beta_k, \alpha_2]$ is of type D2, we define
	\[
	    \bf{T}^\circ_i = [\alpha_1, \beta_1+1, \beta_1, \beta_2, \beta_2 \ldots, \beta_k, \beta_k, \alpha_2-1],
	\]
	and
	\begin{align*}
	    \bf{T}'_i  &= [\alpha_1+1, \beta_1, \beta_1, \ldots, \beta_k, \beta_k, \alpha_2-1], \\
        \bf{T}''_i &=[\alpha_1, \beta'_1, \beta'_1, \ldots, \beta'_k, \beta'_k, \alpha_2] 	
    \end{align*}
    where $\bf{T}''_i$ is the rearrangement of
    \[
         [\alpha_1, \beta_1+1, \beta_1-1, \ldots, \beta_k+1, \beta_k-1, \alpha_2]
    \]
    such that $\beta'_1 \geq \beta'_2 \geq \ldots \geq \beta'_k$.
\end{itemize}

Let
\[
    I^D_e = \{ \bf{T}_{i_1}, \bf{T}_{i_2}, \ldots \} 
\] 
be a subset of $\{ \bf{T}_1, \bf{T}_2, \ldots, \bf{T}_m \}$, defined inductively:
\begin{itemize}
	\item Initial block $\bf{T}_{i_1}$:\\ 
	Let $\bf{T}_{i_1}$ be the first block containing $d_{2l_1-1} = d_{2l_1} \equiv 0$ for some $l_1$.
	\item Subsequent blocks $\bf{T}_{i_j}$:\\
	For $j>1$, let $\bf{T}_{i_j}$ be the $j$-th block containing $d_{2l_j-1} = d_{2l_j} \equiv 0$ for some $l_j$, such that there exists $l'_j$ satisfying 
	\[
	    l_{j-1} < l'_j < l_j,  \quad \text{with} \; d_{2l'_j}=d_{2l'_j+1} \equiv 1.
	\]
\end{itemize}

For $\bf{T}_{i_a} \in I^D_e$, define the partition 
\[
    \bf{d}_{i_a}= [ d'_1, d'_2, \dots, d'_r ]
\]
as the rearragement of
\[
    [\bf{T}''_1, \ldots, \bf{T}''_{i_a-1}, \bf{T}^\circ_{i_a}, \bf{T}'_{i_a+1}, \ldots, \bf{T}'_m]
\]
such that $d'_1 \geq d'_2 \geq \ldots \geq d'_r$. 

The minimal Richardson orbit associated with $\bf{T}_{i_a}$ is the D-collapse of $\bf{d}_{i_a}$, denoted by 
\[
    \bf{d}^D_{i_a} := (\bf{d}_{i_a})_D.
\]

\begin{proposition}\label{prop.mim_R_D}
	All minimal Richardson orbits of $\bf{O}^D_e$ are characterized by $I^D_e$, i.e., they are given by the collection
	\[
	    \{ \bf{O}_{\bf{d}^D_{i_a}} \}_{\bf{T}_{i_a} \in I^D_e}.
	\]
\end{proposition}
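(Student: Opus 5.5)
We follow the same strategy as in the proof of Proposition~\ref{prop.mim_R_B}, adapted to the type~D block structure. The argument has three steps: (a) show that each $\bf{d}^D_{i_a}$ is a Richardson partition lying above $\bf{d}_e$; (b) show that the $\bf{d}^D_{i_a}$ are pairwise incomparable; (c) show that any Richardson $\bf{d}_R \geq \bf{d}_e$ dominates some $\bf{d}^D_{i_a}$.

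For step (a), we first observe that $[\bf{T}''_1,\ldots,\bf{T}''_{i_a-1},\bf{T}^\circ_{i_a},\bf{T}'_{i_a+1},\ldots,\bf{T}'_m]$ has the shape $[\mathbf{Odd}\mid\mathbf{Even}]$ of \eqref{structure of induction}. Indeed, the modifications $\bf{T}''$ make all parts before $\bf{T}_{i_a}$ odd (D1 is kept, D1* becomes $[\beta+1,\beta-1]$, and the $\beta$'s of D2 become odd). The block $\bf{T}^\circ_{i_a}$ provides the switch. The modifications $\bf{T}'$ make all later parts even, up to collapse. Hence this partition is induced from a parabolic whose flag type is read off from the columns, via \eqref{induction}. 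Its D-collapse is therefore Richardson, with an explicit polarization. This also yields the pseudo-polarizations of Remark~\ref{rem.pseudo-polarization}.

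For the domination $\bf{d}^D_{i_a}\ge\bf{d}_e$ we use partial sums. Each modification only replaces a consecutive pair $[x,x]$ by $[x+1,x-1]$, or raises an earlier part while lowering a later part. Both operations raise the partial sums. Since $\bf{d}_e$ is already a D-partition dominated by $\bf{d}_{i_a}$, the maximality property of the D-collapse gives $(\bf{d}_{i_a})_D\ge \bf{d}_e$.

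For step (b), incomparability, we take $i_\alpha<i_\beta$ in $I^D_e$. By definition there is an odd pair $d_{2l'}=d_{2l'+1}$ strictly between them. In $\bf{d}_{i_\alpha}$ this pair becomes $[d_{2l'}+1,d_{2l'+1}-1]$, while the even pair at $\bf{T}_{i_\beta}$ is unchanged. In $\bf{d}_{i_\beta}$ the situation is reversed, with the even pair at $\bf{T}_{i_\alpha}$ split instead. Comparing the partial sums at these two positions shows that neither partition dominates the other. We then check that the D-collapse cannot destroy this. As in the type~B proof, this splits into two cases according to whether the block just before the odd pair is D1 or D1*. In each case we track explicitly where the collapse moves the $+1$, and verify that one strict inequality survives in each direction.

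Step (c) is the main obstacle. Given Richardson $\bf{d}_R\ge\bf{d}_e$, we use the type~D Richardson description: there is a first D1* block $\bar{\bf{T}}_h$ of $\bf{d}_R$, preceded only by D1 blocks and blocks $[\alpha_1,\alpha_2]$. From this we write down an induced partition $\bf{d}_P$ for a polarization of $\bf{d}_R$, built analogously from $\bar{\bf{T}}''$, $\bar{\bf{T}}^\circ_h$ and $\bar{\bf{T}}'$, so that $(\bf{d}_P)_D=\bf{d}_R$. Next we let $\bf{T}_{i_\alpha}\in I^D_e$ be the first element of $I^D_e$ whose even pair occurs at an index $\ge 2s-1$, where $\bar d_{2s-1}$ is the first part of $\bar{\bf{T}}_h$. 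We then prove $\bf{d}_P\ge\bf{d}_{i_\alpha}$ by partial sums. Once this holds, monotonicity of the collapse gives $\bf{d}_R\ge\bf{d}^D_{i_\alpha}$.

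Before index $2s-2$, the inequality comes from $\bf{d}_R\ge\bf{d}_e$, since in that range the odd/even structures agree. From $2s-1$ up to $2l_\alpha$ we run the same case-by-case propagation as in type~B. The cases are: odd pair, even pair equal to the next part, and even pair not equal to the next part. Along the way we carry the surplus $\bar d'_{2s}\ge d'_{2s}+1$ forward until the even pair of $\bf{T}_{i_\alpha}$ is reached. The delicate point is the index shift in type~D: pairs are $(d_{2l-1},d_{2l})$ rather than $(d_{2l},d_{2l+1})$, and $\bf{T}^\circ$ lowers the final $\alpha_2$ of a D2 block. I expect to have to check separately the boundary situation where the even pair of $\bf{T}_{i_\alpha}$ sits inside a D2 block. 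After $2l_\alpha$, both sides are built from the $\bf{T}'$-type modifications, and the inequality follows from $\bf{d}_R\ge\bf{d}_e$ together with the total-sum equality.
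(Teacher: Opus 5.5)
Steps (a) and (b) follow the paper's type~B template, and (a), which the paper leaves implicit, is a worthwhile addition. Step (c), however, has a genuine gap. Your rule for choosing $\alpha$ can be vacuous, and the reduction ``prove $\mathbf{d}_P\ge\mathbf{d}_{i_\alpha}$, then use monotonicity of the collapse'' is false in general.

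Take $\mathfrak{so}_6$ and $\mathbf{d}_e=[2,2,1,1]$, with blocks D1* $[2,2]$ and D1 $[1,1]$. Then $I^D_e=\{\mathbf{T}_1\}$, $\mathbf{d}_{i_1}=[2,2,2,0]$, and $\mathbf{d}^D_{i_1}=[2,2,1,1]=\mathbf{d}_e$.

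The orbit $\mathbf{d}_R=[3,1,1,1]$ is Richardson and dominates $\mathbf{d}_e$. It has no D1* block, only the zero padding $[0,0]$ in positions $(5,6)$ if you pad. So at best $s=3$, and no element of $I^D_e$ has its even pair at an index $\ge 5$: your selection rule is empty.

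Forcing the choice $\mathbf{T}_1$ does not help. Under $\mathfrak{so}_6\cong\mathfrak{sl}_4$, $\mathbf{d}_R$ is the orbit $[2,2]$. Its only polarization has Levi $\mathfrak{gl}_1\times\mathfrak{so}_4$ and induced partition $\mathbf{d}_P=[3,1,1,1]$. Since $3+1+1=5<6=2+2+2$, we get $\mathbf{d}_P\not\ge\mathbf{d}_{i_1}$ for every available $P$. The conclusion $\mathbf{d}_R\ge\mathbf{d}^D_{i_1}$ is true, but only after the collapse.

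The failure sits in the tail. There, $\mathbf{T}'_2=[2,0]$ raises the partial sums of $\mathbf{d}_e$ while $\mathbf{d}_P$ is still in its odd part. Neither $\mathbf{d}_R\ge\mathbf{d}_e$ nor equality of total sums says anything about this. Your remark that ``after $2l_\alpha$ both sides are built from $\mathbf{T}'$-type modifications'' presupposes $2l_\alpha\ge 2s-1$, which is exactly what can fail.

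What is missing is control of the collapse of $\mathbf{d}_{i_\alpha}$ itself. You must compare $\mathbf{d}_R$ directly with $\mathbf{d}^D_{i_\alpha}$. One way is to first describe $\mathbf{d}^D_{i_\alpha}$ explicitly: on the tail the collapse largely undoes the $\mathbf{T}'$-modifications, e.g.\ an isolated $[\alpha+1,\alpha-1]$ coming from a D1 block returns to $[\alpha,\alpha]$. Your choice of $\alpha$ must also handle the case where $\mathbf{d}_R$ changes parity after the last element of $I^D_e$.

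Appealing to ``the same as type~B'' does not close this gap. The paper's type~B argument makes the same reduction (``it is sufficient to show $\mathbf{d}_P\ge\mathbf{d}_{i_\alpha}$'') and breaks the same way. For example, in $\mathfrak{so}_9$ take $\mathbf{d}_e=[3,2,2,1,1]$ and the Richardson orbit $\mathbf{d}_R=[3,3,1,1,1]$. Its unique polarization (Levi $\mathfrak{gl}_2\times\mathfrak{so}_5$) gives $\mathbf{d}_P=[3,3,1,1,1]\not\ge[3,2,2,2]=\mathbf{d}_{i_1}$.
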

\begin{proof}
	The proof follows similarly to that of Proposition~\ref{prop.mim_R_B}.
\end{proof}

\begin{remark}\label{rem.pseudo-polarization}
    The set of associated polarizations can be characterized by the degrees of the generalized Springer maps \eqref{Springer map}, a concept referred to as the \emph{footprint}. For more details on the notion of \emph{footprint}, see \cite[\S~5.1]{FRW24}. Combining \cite[Lemma 5.3]{FRW24} with the results above, one can explicitly determine the footprint.
\end{remark}

\section{The geometry of Spaltenstein varieties} \label{s.Spal_fiber}

Let $\bf{O}_e$ be a nilpotent orbit of type B, C, or D, and $\bf{O}_R$ be a minimal Richardson orbit associated with an element in $I^{\bullet}_e$. Taking an associated polarization $P$:
\[
    \mu_P: T^*(G/P) \rightarrow \overline{\bf{O}}_R \supset \bf{O}_e.
\]
In this subsection, we study the Spaltenstein variety $\mu_P^{-1}(e)$.

Before describing the geometry of Spaltenstein variety, we need the following useful lemmas:

\begin{lemma}\label{lem:isotropic subspace}
	Let $V_i=\CC^{n_i}$, for $i=1,\ldots,k$, endow with nondegenerate symmetric pairings. Consider a set $\mathbf{W}$ of subspaces $W \subset \bigoplus_{i=1}^k V_i$ such that
	\begin{itemize}
		\item $W$ is isotropic,
		\item $\dim \left( W \bigcap \bigoplus_{i=1}^j V_i \right) = \big\lfloor \frac{\sum_{i=1}^j n_i}{2} \big\rfloor$,\footnote{Here $\lfloor \frac{n}{2} \rfloor$ means the greatest integer that smaller than $\frac{n}{2}$.} for $j=1,\ldots,k$.
	\end{itemize}
	Then, $\mathbf{W} \subset \mathrm{OG}\big(\big\lfloor \tfrac{\sum_{i=1}^k n_i}{2} \big\rfloor, \bigoplus_{i=1}^k V_i \big)$. Let $\mathbf{W}_j=\mathbf{W}\cap \left(\oplus_{i=1}^j V_i \right)$, then 
	\[
		\mathbf{W}=\mathbf{W}_k \rightarrow \mathbf{W}_{k-1} \rightarrow \cdots \rightarrow \mathbf{W}_1,
	\]
	where $\mathbf{W}_j \rightarrow \mathbf{W}_{j-1}$ is a $\mathrm{OG}(m_j, N_j )$ fibration, here $m_j=\lfloor \frac{\sum_{i=1}^j n_i}{2} \rfloor - \lfloor \frac{\sum_{i=1}^{j-1} n_i}{2} \rfloor$ and $N_j= \sum_{i=1}^j n_i - 2\lfloor \frac{\sum_{i=1}^{j-1} n_i}{2} \rfloor$.
\end{lemma}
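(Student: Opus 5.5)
The plan is an induction on $k$, restricting $W$ step by step to the partial sums $U_j:=\bigoplus_{i=1}^j V_i$. Throughout I take the orthogonal direct sum of the $V_i$, so each $U_j$ is nondegenerate, and I write $s_j=\sum_{i\le j} n_i$. The first claim is immediate. Since $W=W\cap U_k$ has dimension $\lfloor s_k/2\rfloor$ and is isotropic in the nondegenerate space $U_k$ of dimension $s_k$, it is a maximal isotropic subspace. Hence $W\in \mathrm{OG}(\lfloor s_k/2\rfloor, U_k)$, and $\mathbf{W}$ is a (locally closed, in fact closed) subvariety of it.

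For the fibration structure, I consider the maps $\mathbf{W}_j\to\mathbf{W}_{j-1}$, $W_j\mapsto W_j\cap U_{j-1}$. Here I interpret $\mathbf{W}_j$ as the set of isotropic $W_j\subset U_j$ satisfying the dimension conditions for indices $\le j$; it is the image of $\mathbf{W}$ under intersection with $U_j$. The key step is to identify the fiber over a fixed $W'\in\mathbf{W}_{j-1}$, which has $\dim W'=\lfloor s_{j-1}/2\rfloor$. Any $W_j$ in that fiber is isotropic and contains $W'$, so $W_j\subset W'^{\perp}$, where $\perp$ is taken in $U_j$. Since $W'\subset U_{j-1}$ and $U_j=U_{j-1}\oplus V_j$ orthogonally, we get $W'^{\perp}=W'^{\perp_{U_{j-1}}}\oplus V_j$. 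Therefore
\[
W'^{\perp}/W' \cong \bigl(W'^{\perp_{U_{j-1}}}/W'\bigr)\oplus V_j ,
\]
which is nondegenerate of dimension $s_{j-1}-2\lfloor s_{j-1}/2\rfloor+n_j=N_j$. The first summand has dimension $0$ or $1$ according to the parity of $s_{j-1}$. Isotropic $W_j\supset W'$ correspond bijectively to isotropic subspaces $\bar W_j=W_j/W'$ of $W'^{\perp}/W'$ of dimension $m_j$. For these, $m_j=\lfloor N_j/2\rfloor$ is the maximal isotropic dimension.

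The main obstacle is the extra condition $W_j\cap U_{j-1}=W'$, i.e. $\bar W_j$ should meet the first summand trivially. If $s_{j-1}$ is even this summand is $0$ and nothing needs checking. If $s_{j-1}$ is odd the summand is a nondegenerate line $\ell$, which is non-isotropic, so no isotropic $\bar W_j$ can contain it. Hence the condition is automatic, and the fiber is exactly $\mathrm{OG}(m_j,N_j)$. Conversely, any such $\bar W_j$ lifts to an element of $\mathbf{W}_j$, and since $m_j$ is maximal any element extends to a full element of $\mathbf{W}$. This shows the maps are surjective with the stated fibers.

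Local triviality comes from the homogeneity of the construction. Over $\mathbf{W}_{j-1}$ the quotients $W'^{\perp}/W'$ form a vector bundle with a nondegenerate quadratic form, and the fiberwise orthogonal Grassmannian of maximal isotropic subspaces of it is Zariski locally trivial. This can be seen by locally choosing orthonormal-type frames, using that the relevant orthogonal group acts transitively. The fibration $\mathbf{W}_1\to\mathrm{pt}$ is just $\mathrm{OG}(\lfloor n_1/2\rfloor,n_1)$, which matches the formula with $s_0=0$.
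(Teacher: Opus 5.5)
Your identification of the fibers is correct and is essentially the paper's proof. The paper splits off even-dimensional pieces, reduces to two summands, and identifies the fiber of $W\mapsto W\cap V_1$ with $\mathrm{OG}$ of $W_1^{\perp}/W_1\oplus V_2$, exactly as you do with $W'^{\perp}/W'\cong(W'^{\perp_{U_{j-1}}}/W')\oplus V_j$. Your checks that $W_j\cap U_{j-1}=W'$ is automatic and that the maps are surjective fill in points the paper leaves implicit.

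One step, which goes beyond anything the paper proves, does not work as written: the final paragraph on Zariski local triviality. A quadratic bundle need not admit orthonormal (or hyperbolic) frames Zariski locally, since that requires square roots of units; transitivity of the orthogonal group only gives étale local triviality, because orthogonal groups are not special. For example, over $\operatorname{Spec}\mathbb{C}[t^{\pm1}]$ the isotropic lines of $tx^2+y^2$ form the connected double cover $s^2=-t$. That is precisely the shape of your fibers when $s_{j-1}$ is odd and $n_j=1$.

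In the present situation the claim is nevertheless true, but it needs an argument.
\begin{enumerate}
\item Write $W'^{\perp}/W'=\mathcal{L}\oplus V_j$, with $V_j$ a constant quadratic bundle and $\mathcal{L}$ either $0$ (if $s_{j-1}$ is even) or a line bundle with $q\colon\mathcal{L}^{\otimes 2}\xrightarrow{\sim}\mathcal{O}$.
\item So you only need the $\mu_2$-torsor $\{q=1\}\subset\mathcal{L}$ to be trivial.
\item Argue by induction on $j$ that every connected component of $\mathbf{W}_{j-1}$ is a product of connected components of orthogonal Grassmannians. These components are homogeneous $G/P$'s or points, so each component of $\mathbf{W}_{j-1}$ is simply connected.
\item Hence the torsor is trivial, and $(\mathcal{L},q)\cong(\mathcal{O},x^2)$ on each component.
\item Then $\mathbf{W}_j\to\mathbf{W}_{j-1}$ is in fact a product with $\mathrm{OG}(m_j,N_j)$ over each component, which propagates the induction.
\end{enumerate}
This point matters for the E-polynomial computation in the last section of the paper. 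There, multiplicativity of E-polynomials is invoked for Zariski locally trivial fibrations, and it fails for nontrivial double covers of the type above.
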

\begin{proof}
	Fix an integer $1\leq l<k$ and set $\tilde{V}_1 = \bigoplus_{i=1}^{l} V_i$ and $\tilde{V}_2=\bigoplus_{i=l+1}^k V_i$. First notice that if $\dim \tilde{V}_1 \equiv \dim \tilde{V}_2 \equiv 0$, then $W=(W\cap \tilde{V}_1) \oplus (W\cap \tilde{V}_2)$. Indeed, since $W$ is isotropic, we have $W \subset (W \cap \tilde{V}_1)^\perp \oplus \tilde{V}_2$, where $(W \cap \tilde{V}_1)^\perp$ is the orthogonal complement of $W \cap \tilde{V}_1$. Note that $\dim (W \cap \tilde{V}_1) = \frac{1}{2} \dim \tilde{V}_1$, so $W \cap \tilde{V}_1$ is maximal isotropic. Hence, $(W \cap \tilde{V}_1)^\perp = W \cap \tilde{V}_1$.

	Then, the general case can be proved by induction from the following simple case: $W \subset V_1 \oplus V_2$ where $\dim V_1 \equiv 1$ or $\dim V_2 \equiv 1$. Without loss of generality, suppose $\dim V_1 \equiv 1$, $\mathbf{W}_1 = \mathrm{OG}(\lfloor \frac{n_1}{2} \rfloor, V_1)$. We have a map $\phi: \mathbf{W} \rightarrow \mathbf{W}_1$ sending $W \in \mathbf{W}$ to $W_1:= (W \cap V_1) \in \mathbf{W}_1$. As mentioned above, $W \subset (W \cap V_1)^\perp \oplus V_2$, then it can be shown that $\phi^{-1}(W_1) \cong \mathrm{OG}(\lfloor \frac{n_1+n_2}{2} \rfloor - \lfloor \frac{n_1}{2} \rfloor, W_1^\perp /W_1 \oplus V_2)$.
\end{proof}

\begin{lemma}\label{lem:Lagrangian subspace}
	Let $V'_i=\CC^{n'_i}$, for $i=1,\ldots,k'$, endow with nondegenerate skew symmetric pairings. Then $n'_i \equiv 0$, let $m'_i= \frac{n'_i}{2}$. Consider a set $\mathbf{W}'$ of subspaces $W' \subset \bigoplus_{i=1}^{k'} V'_i$ such that
	\begin{itemize}
		\item $W'$ is isotropic,
		\item $\dim \left( W' \bigcap \bigoplus_{i=1}^j V'_i \right) = \sum_{i=1}^j m'_i$, for $j=1,\ldots,k'$.
	\end{itemize}
	Then, $\mathbf{W}' \cong \prod_{i=1}^{k'} \mathrm{IG}(m'_i, n'_i)$.
\end{lemma}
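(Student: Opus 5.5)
The plan is to mirror the proof of Lemma~\ref{lem:isotropic subspace} and exploit that in the symplectic case every $V'_i$ has even dimension. Write $\tilde V'_j=\bigoplus_{i=1}^j V'_i$. The dimension condition says that $W'_j:=W'\cap\tilde V'_j$ is an isotropic subspace of $\tilde V'_j$ of dimension $\sum_{i\le j}m'_i=\tfrac12\dim\tilde V'_j$. Hence $W'_j$ is Lagrangian in $\tilde V'_j$.

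The key step is to show the splitting $W'=(W'\cap\tilde V'_{j})\oplus (W'\cap \bigoplus_{i>j}V'_i)$ for every $j$. I argue exactly as in the first paragraph of the proof of Lemma~\ref{lem:isotropic subspace}. Since $W'$ is isotropic and the pairing on $\bigoplus_i V'_i$ is the orthogonal direct sum, any $w\in W'$ decomposes as $w=w_1+w_2$ with $w_1\in\tilde V'_j$ and $w_2\in\bigoplus_{i>j}V'_i$. Moreover $w_1$ pairs trivially with $W'_j$, so $w_1\in (W'_j)^{\perp}\cap \tilde V'_j$, where $\perp$ is taken inside $\tilde V'_j$. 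As $W'_j$ is Lagrangian there, $(W'_j)^{\perp}=W'_j$, so $w_1\in W'\cap\tilde V'_j$. It follows that $w_2=w-w_1\in W'$ as well, which gives the splitting. Unlike the orthogonal case, no parity issue arises, because every $\dim\tilde V'_j$ is even; this is why the result is a product rather than a nontrivial fibration.

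Iterating the splitting over $j=1,\dots,k'-1$ (or inducting on $k'$), I get $W'=\bigoplus_{i=1}^{k'} (W'\cap V'_i)$. Set $U_i:=W'\cap V'_i$. Counting dimensions gives $\dim U_i=\dim W'_i-\dim W'_{i-1}=m'_i$, and $U_i$ is isotropic in $V'_i$, hence Lagrangian, so $U_i\in \mathrm{IG}(m'_i,n'_i)$. This defines a map $\mathbf W'\to\prod_i\mathrm{IG}(m'_i,n'_i)$, $W'\mapsto (U_i)_i$. The inverse is $(U_i)_i\mapsto\bigoplus_i U_i$. That subspace is isotropic, being an orthogonal sum of isotropic subspaces, and its intersection with $\tilde V'_j$ is $\bigoplus_{i\le j}U_i$, of the required dimension, so it lies in $\mathbf W'$.

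Both maps are algebraic, so this is an isomorphism of varieties. To see this, regard $\mathbf W'$ as a locally closed subvariety of the Grassmannian $\mathrm{Gr}(\sum_i m'_i,\bigoplus_i V'_i)$. The direct-sum map from the product is a morphism into it, and the intersection maps are morphisms on the locus where $\dim(W'\cap\tilde V'_j)$ is constant. I expect this scheme-theoretic point to be the only mild obstacle; it can be handled by noting that the direct-sum map is a closed embedding onto $\mathbf W'$ with the reduced structure, which is all the later applications need.
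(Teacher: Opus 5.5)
Your proof is correct and is essentially the argument the paper leaves implicit; its proof reads only ``This is straightforward.'' The paper's intended route is the splitting $W'=(W'\cap\tilde V'_j)\oplus\bigl(W'\cap\bigoplus_{i>j}V'_i\bigr)$ from the first paragraph of the proof of Lemma~\ref{lem:isotropic subspace}, which applies at every $j$ because each $\dim\tilde V'_j$ is even, giving $W'=\bigoplus_i(W'\cap V'_i)$ with each summand Lagrangian in $V'_i$. Your closed-embedding remark about $\mathbf{W}'$ with its reduced structure is also enough, since the theorems that use this lemma only concern $\mu_P^{-1}(e)^{\mathrm{red}}$.
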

\begin{proof}
	This is straightforward.
\end{proof}

\textbf{Type B}: Let $\bf{d}_B = [d_1, d_2, \ldots, d_r] = [\bf{T}_1, \bf{T}_2, \ldots, \bf{T}_m]$. Take $\bf{T}_{i_a} \in I^B_e$, let $\bf{O}_{\bf{d}^B_{i_a}}$ be the associated minimal Richardson orbit of $\bf{O}^B_e$. Take any polarization $P_B \in \mathrm{Pol}(\bf{O}_{\bf{d}^B_{i_a}})$, suppose the induced paritition \eqref{induction} is
\[
    \bf{d}=[\tilde {d}_1 \equiv \tilde {d}_2 \equiv \ldots \equiv \tilde{d}_{2l+1} \equiv 1, \tilde {d}_{2l+2} \equiv \ldots \equiv \tilde {d}_r \equiv 0].
\]
We define the following two sets of integers:
\begin{itemize}
	\item Let $\{ d'_{1} > d'_{2} > \ldots > d'_k\} \subset \{ d_{2l+2}, \ldots, d_r \}$ be the distinguished odd parts. Denote $n_i = \#\{d_j \in \mathbf{d}_B \mid d_j =d'_i \}$ for $i=1, \ldots, k$.
	\item Let $\{ d''_{1} > d''_{2} > \ldots > d''_{k'}\} \subset \{ d_{1}, \ldots, d_{2l+1} \}$ be the distinguished even parts. Denote $n'_i = \#\{d_j \in \mathbf{d}_B \mid d_j =d''_i \}$ for $i=1, \ldots, k'$. Note that $n'_i \equiv 0$.
\end{itemize}
Construct $\mathbf{W}_B$ (resp. $\mathbf{W}'_B$) as in Lemma~\ref{lem:isotropic subspace} (resp. Lemma~\ref{lem:Lagrangian subspace}) via $\{n_1, \ldots, n_k \}$ (resp. $\{n'_1, \ldots, n'_{k'} \}$).

\begin{theorem}\label{Thm:Spal_fiber B}
	With the above notations, the Spaltenstein variety $\mu_{P_B}^{-1}(e)^{\mathrm{red}}$ (with reduced structure) of 
	\[
	     \mu_{P_B}: T^*(\mathrm{SO}_{2n+1}/P_B) \rightarrow \overline{\mathbf{O}}_{\mathbf{d}_{i_a}^{B}} \supset \mathbf{O}^B_e 
	\] 
	is isomorphic to $\mathbf{W}_B \times_{\mathrm{pt}} \mathbf{W}'_B$.
\end{theorem}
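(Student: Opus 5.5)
We identify $\mu_{P_B}^{-1}(e)$ with the variety of admissible isotropic flags
\[
\mathbb{C}^{2n+1}=F_0\supset F_1\supset\cdots\supset F_k\supset F_k^\perp\supset\cdots\supset F_1^\perp\supset 0
\]
of the type $(p_1,\ldots,p_k;q)$ of $P_B$ that are \emph{$e$-stable}, meaning $e F_{i}\subset F_{i+1}$, $eF_k\subset F_k^\perp$ and $eF_k^\perp\subset F_k$ (nilradical condition). The first step is to use the fact that $\mathbf{O}_R=\mathbf{O}_{\mathbf{d}^B_{i_a}}$ is \emph{minimal} Richardson over $\mathbf{O}_e$ to show that such flags are extremely rigid. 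Concretely, for an $e$-stable flag the Jordan types of $e$ on the successive subquotients give an induction datum whose induced partition dominates $\mathbf{d}_B$ and is dominated by $\mathbf{d}^B_{i_a}$. Comparing partial sums with the explicit form $[\mathbf{T}''_1,\ldots,\mathbf{T}^\circ_{i_a},\mathbf{T}'_{i_a+1},\ldots]$ (as in the proof of Proposition~\ref{prop.mim_R_B}) will force each partial-sum inequality to be an equality except at the prescribed places. This pins down, for every $i$, the subspace $F_i$ up to choices inside the isotypic pieces of $\ker e^{j}/(\ker e^{j-1}+\operatorname{im} e)$ for the relevant $j$. Equivalently, $F_i$ must be a sum of $\operatorname{im} e^{a}$, $\ker e^{b}$ pieces plus a subspace in the multiplicity spaces of the parts $d'_i$ and $d''_i$.

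Next we decompose $\mathbb{C}^{2n+1}=\bigoplus_d M_d\otimes \mathbb{C}^d$ by Jordan blocks, where $M_d$ is the multiplicity space of the part $d$. The form induces a nondegenerate form on $M_d$: symmetric for $d$ odd and symplectic for $d$ even. The rigidity step should show that the only freedom in the flag is a choice of an isotropic subspace $W\subset\bigoplus_i M_{d'_i}$ of half dimension and of $W'\subset \bigoplus_i M_{d''_i}$. Here the odd parts $d'_i$ lie in the even-tail region $\{d_{2l+2},\ldots\}$, where each step of $\tilde d$ is an even parity step that cuts an odd block in half. The even parts $d''_i$ lie in the odd-head region, where $\mathbf{T}''$ splits $[\beta,\beta]$ into $[\beta+1,\beta-1]$. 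The filtration conditions $\dim(W\cap\bigoplus_{i\le j}M_{d'_i})=\lfloor\sum_{i\le j}n_i/2\rfloor$ come from requiring that the piece $F$ at the level $e^{d'_j}$ meets the larger blocks maximally isotropically, which is what the flag dimensions $p_i$ computed from $\mathbf{d}$ force. The analogous Lagrangian conditions give $W'\in \mathbf{W}'_B$. Then Lemma~\ref{lem:isotropic subspace} and Lemma~\ref{lem:Lagrangian subspace} describe these loci.

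Finally we construct the inverse map. Given $(W,W')\in\mathbf{W}_B\times\mathbf{W}'_B$, we write down the flag explicitly via $F_i=\sum_j e^{c_{ij}}(W\otimes\mathbb{C}^{d'_j})+\cdots$. We then check that it is isotropic, $e$-stable and of the correct type, and that the two constructions are mutually inverse on reduced points. Since both sides are reduced varieties and the map is bijective and algebraic in families with smooth target, an isomorphism follows, for instance by exhibiting both directions as morphisms. The two choices decouple because the odd and even multiplicity spaces are orthogonal, which gives the product over a point.

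The main obstacle is the rigidity step: extracting from minimality of the Richardson orbit that the flag is determined up to exactly those isotropic choices, with the right dimension conditions. This requires careful bookkeeping of the collapse in $\mathbf{d}^B_{i_a}$ and of the block types B1, B1*, B2 and B3. I would first treat a single block ($m=1$ or $2$) by hand, then induct on the number of blocks by splitting off $\mathbf{T}_1$ via $F_1$ and passing to $F_1^\perp/F_1$ with the induced nilpotent.
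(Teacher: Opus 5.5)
\textbf{Verdict: there is a genuine gap.} It sits in the step you yourself flag as the main obstacle, and the mechanism you propose for that step does not work.

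\textbf{Why the rigidity argument fails.} A point of $\mu_{P_B}^{-1}(e)$ is a flag with $e$ in the nilradical, so $e$ induces the zero map on every subquotient $F_{i-1}/F_i$ and on $F_k/F_k^\perp$. The ``induction datum'' read off from the subquotients is therefore always the zero orbit of the Levi. Its induced partition is $\mathbf{d}^B_{i_a}$ for \emph{every} flag in the fiber. So the sandwich $\mathbf{d}_B\le(\cdot)\le\mathbf{d}^B_{i_a}$ holds trivially, and no partial-sum comparison with $[\mathbf{T}''_1,\ldots,\mathbf{T}^\circ_{i_a},\mathbf{T}'_{i_a+1},\ldots]$ can force anything about the $F_i$.

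Minimality of $\mathbf{O}_R$ is a property of $e$, not of an individual flag. To make it act on flags, you would have to build, from a non-rigid flag, another parabolic $Q$ with $e\in\mathfrak{n}_Q$ and Richardson orbit strictly below $\mathbf{O}_R$. Nothing in the plan does this.

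\textbf{Other unfinished pieces.}
\begin{itemize}
\item The inverse formula ``$F_i=\sum_j e^{c_{ij}}(W\otimes\mathbb{C}^{d'_j})+\cdots$'' is never specified.
\item The induction through $F_1/F_1^\perp$ does not split off $\mathbf{T}_1$. Peeling off $F_1$ subtracts $2$ from the $p_1$ largest induced parts, which need not form a block. Moreover, the Jordan type of the induced nilpotent on $F_1/F_1^\perp$ varies with the choice of $F_1^\perp\subset\ker e$. So that recursion stratifies the fiber rather than reducing it.
\end{itemize}
As written, only the last step (applying Lemmas~\ref{lem:isotropic subspace} and~\ref{lem:Lagrangian subspace}) is actually carried out.

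\textbf{The missing idea, which is the paper's route.} Do not try to pin down each $F_i$ separately. Work with the single subspace $U=F_k^\perp$.
\begin{itemize}
\item $U$ is isotropic, $e$-stable, and of dimension $\sum_i p_i$.
\item The nilradical conditions give $\operatorname{im}e^{k+1}\subset U\subset\ker e^{k}$ and $e^iU\subset F^\perp_{k-i}$. Hence $\dim e^iU\le\sum_{j\le k-i}p_j$.
\item Together with isotropy, these squeeze $U$ between $E$ and $E\oplus V$. Here $E$ is spanned by the Jordan vectors $x(i,j)$ with $j\le p'_{k+1-i}$.
\item $V=V'\oplus V''$ is built from one level of the Jordan blocks of the distinguished odd parts (with the induced symmetric form) and of the distinguished even parts (with the form $\langle v,fw\rangle$). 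These are exactly your multiplicity spaces $M_{d'_i}$ and $M_{d''_i}$.
\item So $U=E\oplus W$ with $W=U\cap V$ maximal isotropic. Because $W$ is isotropic, the rank inequalities become the filtration equalities required in the two lemmas.
\item The dimension count $\dim e^iU=\sum_{j\le k-i}p_j=\dim F^\perp_{k-i}$ gives $F^\perp_{k-i}=e^iU$.
\end{itemize}
The whole flag is thus recovered from $W$ as $F^\perp_{k-i}=e^i(E\oplus W)$. This supplies the inverse morphism explicitly, and makes both the per-$F_i$ rigidity argument and the induction on blocks unnecessary.
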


Before giving the proof, we need to introduce the Jordan basis. 

The partition $\mathbf{d}_{B}$ determines a Young tableau $\text{Y}(\mathbf{d}_{B}) \subset \mathbb{Z}^2_{>0}$ such that $(i, j) \in \text{Y}(\mathbf{d}_{B}) $ if and only if $1 \leq j \leq r$ and $1 \leq i \leq d_j$. We choose a Jordan basis of $\mathbb{C}^{2n+1}$, $\left\{ x(i,j) \right\}_{(i,j) \in Y(\bf{d}_{B})}$, for $e \in \mathbf{O}^{B}_e$ as follows
\begin{itemize}
    \item $e \cdot x(i,j) = x(i-1, j) $ for $i > 1$, and $e \cdot x(1,j) = 0 $.
    \item $\langle x(i,j), x(p,q)  \rangle \neq 0 $ if and only if $i+p=d_j+1$ and $q=\beta(j)$. Here $\langle -, - \rangle$ is the pairing on $\mathbb{C}^{2n+1}$ and $\beta$ is a permutation of $\{1, \ldots, r\}$ such that $\beta^2=\id$, $d_{\beta(j)}=d_j$, and $\beta(j) \neq j$ if $d_j \equiv 0$. 
\end{itemize}
In the following, we choose a $\beta$ such that $\beta(j) = j$ if $d_j \equiv 1$.

\begin{proof}[Proof of Theorem~\ref{Thm:Spal_fiber B}]
	Let $\bf{d}=[d_1,\ldots,d_r]=[ \bf{T}_1, \ldots, \bf{T}_{i_a-1}, \bf{T}_{i_a}, \bf{T}_{i_a+1}, \ldots, \bf{T}_m]$ be the partition of $\bf{O}^B_e$. Recall that $\bf{d}_{i_a}=[ \bf{T}''_1, \ldots, \bf{T}''_{i_a-1}, \bf{T}^\circ_{i_a}, \bf{T}'_{i_a+1}, \ldots, \bf{T}'_m]$, and $\bf{d}_{i_a}^B$ is the B-collapse of $\bf{d}_{i_a}$. 

	Let $\bf{d}_{i_a}=[\bar{d}_1 \equiv \bar{d}_2 \equiv \ldots \equiv \bar{d}_{2l'+1} \equiv 1, \bar{d}_{2l'+2} \equiv \ldots \equiv \bar{d}_{r} \equiv 0]$ (here $\bar{d}_{r}$ may be zero), and suppose the induced partition from the pseudo-polarization $P_B$ is $\bf{d}=[\tilde {d}_1 \equiv \tilde {d}_2 \equiv \ldots \equiv \tilde{d}_{2l+1} \equiv 1, \tilde {d}_{2l+2} \equiv \ldots \equiv \tilde {d}_r \equiv 0]$. From \cite[\S 5.1]{FRW24}, we have $l \geq l'$, and if $l' > l$, then the additional odd parts in $\bf{d}_{i_a}$ (i.e., those with indices $2l+2, \ldots, 2l'+1$) arise from the collapse process. More precisely, we have $\bar{d}_{2s+1} \geq \bar{d}_{2s+2}+2$ for $s=l+1, \ldots, l'-1$ and $\tilde{d}_{2s}=\bar{d}_{2s}+1$, $\tilde{d}_{2s+1}=\bar{d}_{2s+1}-1$ for $s=l+1, \ldots, l'$.

	Suppose the Levi type of $P_B$ is $(p_1, p_2, \ldots, p_k; q)$ (we have the requirement: $p_1 \leq p_2 \leq \ldots \leq p_k$), from the induction process~\eqref{induction}, we know that $q=2l+1$, $k=\lfloor \frac{d_1}{2} \rfloor$ and 
	\[
		[p_k, \ldots, p_2, p_1]^t = [\tfrac{\tilde{d}_1-1}{2}, \ldots, \tfrac{\tilde{d}_{2l+1}}{2}, \tfrac{\tilde{d}_{2l+2}}{2}, \ldots, \tfrac{\tilde{d}_{r}}{2}].
	\]

	Now, we first construct the isotropic subspace $F_k^\perp \subset \CC^{2n+1}$ of dimension $\sum_i p_i$ in the admissible flag~\eqref{isotropic flag}. Let $\{ d'_{1}, d'_{2}, \ldots, d'_{m}\} \subset \{ d_{2l+2}, \ldots, d_r \}$ (resp. $\{ d''_{1}, d''_{2}, \ldots, d''_{m'}\} \subset \{ d_{1}, \ldots, d_{2l+1} \}$) be the distinguished odd (resp. even) parts, with multiplicities $\{n_1, n_2, \ldots, n_m\}$ (resp. $\{n'_1, n'_2, \ldots, n'_{m'}\}$). Define the modified sequence $\{p'_1, \ldots, p'_k\}$ as follows:
		\begin{enumerate}
			\item For some $d'_\alpha$ with $n_\alpha > 1$, if $i = k-\lfloor \frac{d'_\alpha}{2} \rfloor$, then $p'_i = p_i - \lfloor \frac{n_\alpha}{2} \rfloor$;
			\item For some $d'_\alpha$ with $n_\alpha = 1$, if $i = k-\lfloor \frac{d'_\alpha}{2} \rfloor$ and $d'_\alpha = d_\alpha + 1$, then $p'_i = p_i - 1$;
			\item For some $d''_\beta$, if $i = k- \frac{d''_\beta}{2}$, then $p'_i = p_i - \frac{n'_\beta}{2}$;
			\item Otherwise, $p'_i = p_i$.
		\end{enumerate}

	Let
	\[
		E =  \bigoplus_{1 \leq i \leq k} \bigoplus_{1 \leq j \leq p'_{k+1-i}} \CC x(i,j),
	\]
	and
	\[
		V = V' \oplus V''= \bigoplus_{1 \leq \alpha \leq m} V'_\alpha \oplus \bigoplus_{1 \leq \beta \leq m'} V''_\beta,
	\]
	where (set $i_\alpha = k-\lfloor \tfrac{d'_\alpha}{2} \rfloor$ and $i_\beta = k- \tfrac{d''_\beta}{2}$)
	\[
		V'_\alpha = \bigoplus_{p'_{i_\alpha}+1 \leq j \leq p'_{i_\alpha}+n_\alpha} \CC x(i_\alpha,j), \quad V''_\beta = \bigoplus_{p'_{i_\beta}+1 \leq j \leq p'_{i_\beta}+n'_\beta} \CC x(i_\beta,j).
	\]

	First notice that $\dim V' \equiv \dim V'' \equiv 0$, and there are nondegenerate pairings on $V'$ and $V''$. The one on $V'$ is induced from the pairing on $\CC^{2n+1}$, while the one on $V''$ is induced from the skew-symmetric pairing defined as follows: let $\{e, h, f \}$ be the standard triple such that $f \cdot x(i,j) \in \CC x(i+1,j)$ for $i < d_j$ and $f \cdot x(d_j,j) = 0$, then the pairing on $V''$ is defined by $\langle v, w \rangle = \langle v, f(w) \rangle$ for $v, w \in V''$.  

	Since, $F_k^\perp$ is isotropic, it can be shown that $E \subset F_k^\perp \subset E \oplus V$. Note that $\dim E = \dim F_k^\perp - \tfrac{1}{2}\dim V$. Then, it is sufficient to construct the maximal isotropic subspace $W \subset V$ such that
	\begin{align}\label{eq:dim condition}
		\begin{split}
			& \dim \big(W \cap \bigoplus_{m-j \leq \alpha \leq m} V'_\alpha \big) \geq \big\lfloor \tfrac{\sum_{m-j \leq \alpha \leq m} n_\alpha}{2} \big\rfloor, \\
			& \dim \big(W \cap (V' \oplus \bigoplus_{m'-j' \leq \beta \leq m'} V''_\beta) \big) \geq \tfrac{1}{2} \dim V' + \big\lfloor \tfrac{\sum_{m'-j' \leq \beta \leq m'} n'_\beta}{2} \big\rfloor.
		\end{split}
	\end{align}
	since $e^i (F_k^\perp) \subset F_{k-i}^\perp$. However, $W$ is isotropic, then the inequalities in \eqref{eq:dim condition} are equalities. Hence, we can apply Lemma~\ref{lem:isotropic subspace} and \ref{lem:Lagrangian subspace}. 

	Finally, by the dimension count, we have $\dim (e^i (F_k^\perp)) = \sum_{j=1}^{k-i} p_j = \dim F^\perp_{k-i}$ for $i=1, \ldots, k-1$. Hence, to construct the admissible flag, it is enough to construct $F_k^\perp$. This finishes the proof.
\end{proof}

\textbf{Type C}: Let $\bf{d}_C = [d_1, d_2, \ldots, d_r] = [\bf{T}_1, \bf{T}_2, \ldots, \bf{T}_m]$. Take $\bf{T}_{i_a} \in I^C_e$, let $\bf{O}_{\bf{d}^C_{i_a}}$ be the associated minimal Richardson orbit of $\bf{O}^C_e$. Take any polarization $P_C \in \mathrm{Pol}(\bf{O}_{\bf{d}^C_{i_a}})$, suppose the induced paritition \eqref{structure of induction} is
\[
    \bf{d}=[\tilde {d}_1 \equiv \tilde {d}_2 \equiv \ldots \equiv \tilde {d}_{2l} \equiv 1, \tilde {d}_{2l+1} \equiv \ldots \equiv \tilde {d}_r \equiv 0].
\]
We define the following two sets of integers:
\begin{itemize}
	\item Let $\{ d'_{1} > d'_{2} > \ldots > d'_k\} \subset \{ d_{1}, \ldots, d_{2l} \}$ be the distinguished even parts. Denote $n_i = \#\{d_j \in \mathbf{d}_C \mid d_j =d'_i \}$ for $i=1, \ldots, k$.
	\item Let $\{ d''_{1} > d''_{2} > \ldots > d''_{k'}\} \subset \{ d_{2l+1}, \ldots, d_{r} \}$ be the distinguished odd parts. Denote $n'_i = \#\{d_j \in \mathbf{d}_C \mid d_j =d''_i \}$ for $i=1, \ldots, k'$. Note that $n'_i \equiv 0$.
\end{itemize}
Construct $\mathbf{W}_C$ (resp. $\mathbf{W}'_C$) as in Lemma~\ref{lem:isotropic subspace} (resp. Lemma~\ref{lem:Lagrangian subspace}) via $\{n_1, \ldots, n_k \}$ (resp. $\{n'_1, \ldots, n'_{k'} \}$). Similarly, we have

\begin{theorem}\label{Thm:Spal_fiber C}
	With the above notations, the Spaltenstein variety $\mu_{P_C}^{-1}(e)^{\mathrm{red}}$ (with reduced structure) of 
	\[
	    \mu_{P_C}: T^*(\mathrm{Sp}_{2n}/P_C) \rightarrow \overline{\mathbf{O}}_{\mathbf{d}_{i_a}^{C}} \supset \mathbf{O}^C_e 
	\] 
	is isomorphic to $\mathbf{W}_C \times_{\mathrm{pt}} \mathbf{W}'_C$.
\end{theorem}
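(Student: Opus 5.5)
I will mirror the proof of Theorem~\ref{Thm:Spal_fiber B}, swapping the roles of the symmetric and skew forms. A point of $\mu_{P_C}^{-1}(e)$ is an admissible isotropic flag of type $(p_1,\ldots,p_k;q)$ in $\CC^{2n}$ stabilized by $e$ in the parabolic sense, i.e.\ $e(F_i^\perp)\subset F_{i-1}^\perp$ and $e(F_k)\subset F_k^\perp$. Here $q=2l$ is the number of odd parts of the induced partition, and $[p_k,\ldots,p_1]^t$ is read off from $[\tfrac{\tilde d_1-1}{2},\ldots,\tfrac{\tilde d_{2l}-1}{2},\tfrac{\tilde d_{2l+1}}{2},\ldots,\tfrac{\tilde d_r}{2}]$.

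Fix a Jordan basis $\{x(i,j)\}$ for $e$ adapted to the symplectic form: $\langle x(i,j),x(p,q)\rangle\neq 0$ iff $i+p=d_j+1$ and $q=\beta(j)$, with $\beta(j)=j$ for even $d_j$ and $\beta(j)\neq j$ for odd $d_j$. First, using the description of $\mathbf{d}^C_{i_a}$ as the $C$-collapse of $\mathbf{d}_{i_a}$ and \cite[\S5.1]{FRW24}, I will compare $\tilde{\mathbf d}$ with $\mathbf{d}_C$. Away from the distinguished parts, the columns of the induced partition agree with those of $\mathbf d_C$ up to the collapse shifts $\pm1$. This gives modified integers $p'_i$:
\begin{itemize}
\item for distinguished even parts $d'_\alpha$ (lying in the odd region) subtract $\lfloor n_\alpha/2\rfloor$ at level $i_\alpha=k-\lfloor d'_\alpha/2\rfloor$;
\item for distinguished odd parts $d''_\beta$ subtract $n'_\beta/2$ at level $i_\beta=k-\lfloor d''_\beta/2\rfloor$.
\end{itemize}
Then define the forced subspace $E=\bigoplus_i\bigoplus_{j\le p'_{k+1-i}}\CC x(i,j)$ together with the complementary pieces $V'_\alpha$ and $V''_\beta$ spanned by the middle-row vectors $x(i_\alpha,j)$ and $x(i_\beta,j)$ of those Jordan blocks.

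The key step is to show $E\subset F_k^\perp\subset E\oplus V$ for every point of the fiber, with $\dim E=\dim F_k^\perp-\tfrac12\dim V$. For this I use that $F_k^\perp$ is isotropic and $e$-stable in the filtered sense, so $e^i(F_k^\perp)\subset F_{k-i}^\perp$. The dimensions forced by $\dim F^\perp_{k-i}=\sum_{j\le k-i}p_j$ then pin down $F_k^\perp\cap\ker e^i$ up to the middle vectors.

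Next come the induced forms. On $V'$ (the middle vectors of even-length blocks) the restriction of $\langle\cdot,\cdot\rangle$ is symmetric and nondegenerate, because $\langle x(i,j),x(i,j)\rangle$ with $2i=d_j+1$ fails for even $d_j$. So the correct form is $\langle v,f w\rangle$, as in the type B proof: it is symmetric when $d_j$ is even and skew when $d_j$ is odd. I will check both parities carefully, since in type B the roles are reversed. On $V''$ (odd blocks, paired by $\beta$) the form is skew. Isotropy of $F_k^\perp$, combined with the filtration conditions coming from $e^i(F_k^\perp)\subset F^\perp_{k-i}$, yields the inequalities \eqref{eq:dim condition} for $W=F_k^\perp\cap V$. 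Since $W$ is isotropic, these inequalities are equalities. Then Lemma~\ref{lem:isotropic subspace} identifies the $V'$-part with $\mathbf{W}_C$, Lemma~\ref{lem:Lagrangian subspace} identifies the $V''$-part with $\mathbf{W}'_C$, and the two choices are independent, giving the product. Finally, the dimension count $\dim e^i(F_k^\perp)=\dim F^\perp_{k-i}$ shows the whole flag is recovered from $F_k^\perp$. This produces a bijective morphism; it is an isomorphism onto the reduced fiber because the inverse map $W\mapsto$ flag is algebraic.

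I expect the main obstacle to be the combinatorial step: verifying that $p'_i$ correctly accounts for the $C$-collapse in every block configuration (C1, C1$^*$, C2 with $\beta_2=0$). In particular, collapse shifts on the block $\mathbf{T}^\circ_{i_a}=[\beta_1+1,\ldots,\beta_2-1]$ must not create extra freedom beyond $V$. I will handle this by case analysis on the position of $\mathbf{T}_{i_a}$, following the cases (1)--(4) in the proof of Proposition~\ref{prop.mim_R_B}.
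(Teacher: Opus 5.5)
\paragraph{Verdict.} Your plan is the paper's own argument. The paper proves the type C theorem only by saying it is ``similar'' to type B, and you mirror that proof with the symmetric and skew forms exchanged. However, the step that actually produces $\mathbf{W}_C$ is not justified by what you cite, and carried out as written it gives the wrong answer.

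\paragraph{Where the gap is.} You get the constraints on $W=F_k^\perp\cap V$ from two facts: $F_k^\perp$ is isotropic, and $e^i(F_k^\perp)\subset F^\perp_{k-i}$. You then assert that $W$ is isotropic. On $V'$ neither fact says anything.
\begin{itemize}
\item The ambient form vanishes identically on row $d/2$ of the blocks of even length $d$, since $d/2+d/2\neq d+1$. So its restriction to $V'$ is zero, not ``symmetric and nondegenerate''.
\item The filtration conditions only give lower bounds on dimensions.
\end{itemize}
Test case: take $\mathbf{d}_C=[2,2]$ in $\mathfrak{sp}_4$, which is its own minimal Richardson orbit, with $P$ of type $(1;2)$, i.e.\ $T^*\mathbb{P}^3\to\overline{\mathbf{O}}_{[2,2]}$. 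Here $E=0$ and $V=V'=\ker e$ with $n_1=2$. Your ingredients only give $\ell=F_1^\perp\subset\ker e$, which is a $\mathbb{P}^1$. The actual fiber is two points, namely $\mathrm{OG}(1,2)$, consistent with $\deg\mu_P=2$.

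\paragraph{The missing input.} It is the self-dual condition $e(F_k)\subset F_k^\perp$. You list it when describing the fiber but never use it. It is automatic when $q=0$, but not when $q=2l>0$, and that is the only case in which $V'\neq 0$.

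Since $F_k^\perp$ is the annihilator of $F_k$, the condition says that the symmetric form $(u,v)\mapsto\langle u,ev\rangle$ vanishes on $F_k=(F_k^\perp)^\perp$. The argument then runs as follows.
\begin{itemize}
\item On the blocks carrying $V'$, $F_k$ contains, in row $d/2+1$, the annihilator of $W\cap V'$.
\item Moving this to row $d/2$ by $e$ shows that the orthogonal of $W\cap V'$ is isotropic for the symmetric form $\langle u,e^{-1}v\rangle$ on $V'$.
\item This form equals your $\langle u,fv\rangle$ up to the factor $(d/2)^{2}$ on each $V'_\alpha$.
\item Hence $W\cap V'$ is Lagrangian for it, which is exactly the input Lemma~\ref{lem:isotropic subspace} needs.
\end{itemize}
In the example this is the condition $e(\ell^\perp)\subset\ell$, which cuts $\ker e$ down to its two isotropic lines for $\langle\cdot,f\cdot\rangle$. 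The paper's type B proof is equally terse here (``$W$ is isotropic''), so this is the step you must actually supply.

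\paragraph{A smaller slip.} In type C, $\langle v,fw\rangle=-\langle fv,w\rangle=\langle w,fv\rangle$ is symmetric on all of $\CC^{2n}$, and it vanishes on the middle rows of odd blocks. So the skew form on $V''$ is simply the restriction of the ambient form. It is not a twisted form whose symmetry depends on the parity of $d_j$.
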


\textbf{Type D}: Let $\bf{d}_D = [d_1, d_2, \ldots, d_r] = [\bf{T}_1, \bf{T}_2, \ldots, \bf{T}_m]$. Take $\bf{T}_{i_a} \in I^D_e$, let $\bf{O}_{\bf{d}^D_{i_a}}$ be the associated minimal Richardson orbit of $\bf{O}^D_e$. Take any polarization $P_D \in \mathrm{Pol}(\bf{O}_{\bf{d}^D_{i_a}})$, suppose the induced paritition \eqref{structure of induction} is
\[
    \bf{d}=[\tilde {d}_1 \equiv \tilde {d}_2 \equiv \ldots \equiv \tilde \equiv {d}_{2l} \equiv 1, \tilde {d}_{2l+1} \equiv \ldots \equiv \tilde {d}_r \equiv 0].
\]
We define the following two sets of integers:
\begin{itemize}
	\item Let $\{ d'_{1} > d'_{2} > \ldots > d'_k\} \subset \{ d_{2l+1}, \ldots, d_r \}$ be the distinguished odd parts. Denote $n_i = \#\{d_j \in \mathbf{d}_D \mid d_j =d'_i \}$ for $i=1, \ldots, k$.
	\item Let $\{ d''_{1} > d''_{2} > \ldots > d''_{k'}\} \subset \{ d_{1}, \ldots, d_{2l} \}$ be the distinguished even parts. Denote $n'_i = \#\{d_j \in \mathbf{d}_D \mid d_j =d''_i \}$ for $i=1, \ldots, k'$. Note that $n'_i \equiv 0$.
\end{itemize}
Construct $\mathbf{W}_D$ (resp. $\mathbf{W}'_D$) as in Lemma~\ref{lem:isotropic subspace} (resp. Lemma~\ref{lem:Lagrangian subspace}) via $\{n_1, \ldots, n_k \}$ (resp. $\{n'_1, \ldots, n'_{k'} \}$). Similar as type B case, we have

\begin{theorem}\label{Thm:Spal_fiber D}
	With the above notations, the Spaltenstein variety $\mu_{P_D}^{-1}(e)^{\mathrm{red}}$ (with reduced structure) of 
	\[
	     \mu_{P_D}: T^*(\mathrm{SO}_{2n}/P_D) \rightarrow \overline{\mathbf{O}}_{\mathbf{d}_{i_a}^{D}} \supset \mathbf{O}^D_e 
	\] 
	is isomorphic to $\mathbf{W}_D \times_{\mathrm{pt}} \mathbf{W}'_D$.
\end{theorem}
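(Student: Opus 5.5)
The plan is to run the argument of Theorem~\ref{Thm:Spal_fiber B} again, with the changes needed for type D. The form is now symmetric on $\CC^{2n}$, so even parts occur with even multiplicity and are paired by the involution $\beta$ ($\beta(j)\neq j$), while odd parts are self-paired. First I will fix a Jordan basis $\{x(i,j)\}_{(i,j)\in Y(\mathbf{d}_D)}$ for $e$, with $\beta(j)=j$ whenever $d_j\equiv 1$.

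Next I record the Levi type $(p_1,\ldots,p_k;q)$ of $P_D$. By the induction procedure \eqref{induction}, $q=2l$ is the number of odd parts of the induced partition $\mathbf{d}$, and $[p_k,\ldots,p_1]^t$ is obtained from the halves $\lfloor \tilde d_j/2\rfloor$. I will compare $\mathbf{d}$ with $\mathbf{d}_{i_a}$ and with its $D$-collapse $\mathbf{d}^D_{i_a}$ using \cite[\S5.1]{FRW24}, as in the type B case. The point is that any extra odd parts of $\mathbf{d}_{i_a}$ come from collapses, and each collapse shifts a pair of parts by $\pm1$.

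From this I define the modified sequence $p'_i$. I subtract $\lfloor n_\alpha/2\rfloor$ at level $k-\lfloor d'_\alpha/2\rfloor$ for each distinguished odd part $d'_\alpha$, with the appropriate correction when $n_\alpha=1$. I subtract $n'_\beta/2$ at level $k-d''_\beta/2$ for each distinguished even part. This gives
\[
E=\bigoplus_{i}\bigoplus_{j\le p'_{k+1-i}}\CC x(i,j),\qquad V=V'\oplus V''.
\]
Here $V'$ is spanned by the top-free basis vectors in the rows of the distinguished odd parts and carries the restricted symmetric form. $V''$ is spanned by the corresponding vectors for the distinguished even parts and carries the skew form $\langle v,f w\rangle$ coming from an $\mathfrak{sl}_2$-triple.

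The core step is to show that any point of the Spaltenstein fiber, i.e.\ any admissible isotropic flag with $e(F_i)\subset F_{i+1}$, has $F_k^\perp$ satisfying $E\subset F_k^\perp\subset E\oplus V$. To prove this I will use $e^i(F_k^\perp)\subset F_{k-i}^\perp$, isotropy of $F_k^\perp$, and the dimension count $\dim E=\dim F_k^\perp-\tfrac12\dim V$. The flag condition then turns into the filtered inequalities \eqref{eq:dim condition} on $W=F_k^\perp/E\subset V$. Since $W$ is isotropic, each inequality is forced to be an equality. This places $W$ exactly in the set described by Lemma~\ref{lem:isotropic subspace} for $V'$ and by Lemma~\ref{lem:Lagrangian subspace} for $V''$, which gives $\mathbf{W}_D\times\mathbf{W}'_D$. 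Conversely, given such a $W$, the subspaces $F_{k-i}^\perp:=e^i(F_k^\perp)$ have the correct dimensions $\sum_{j\le k-i}p_j$, so the whole flag is recovered from $F_k^\perp$. Both maps are algebraic, so they give an isomorphism on reduced structures.

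I expect the main obstacle to be type-D-specific bookkeeping in the containment $E\subset F_k^\perp\subset E\oplus V$.

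The first issue is the very even case and the two $\SO_{2n}$-classes of maximal isotropic subspaces. The admissibility condition $q\neq 2$ and the choice of component of $\mathrm{OG}$ must be made consistently. In Lemma~\ref{lem:isotropic subspace} the fibers $\mathrm{OG}(m_j,N_j)$ with $N_j$ even have two components, and the polarization selects one of them. I would handle this by restricting to the connected family singled out by $P_D$ and checking that the statement still holds componentwise.

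The second issue is the modified block $\mathbf{T}^\circ_{i_a}=[\alpha_1,\beta_1+1,\beta_1,\ldots,\alpha_2-1]$. Here the indexing of the distinguished parts (the $d_{2l_j-1}=d_{2l_j}$ convention of $I^D_e$) is shifted by one relative to type B. I would verify the level-by-level inequalities as in cases (1)–(4) of the proof of Proposition~\ref{prop.mim_R_B}, with every parity shifted by one.
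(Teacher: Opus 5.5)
Your route is the paper's route. The paper proves Theorem~\ref{Thm:Spal_fiber D} only by saying it is ``similar as type B''. Your outline is exactly that transcription: Jordan basis, modified $p'_i$, $E\subset F_k^\perp\subset E\oplus V$, the inequalities \eqref{eq:dim condition} forced to be equalities, Lemmas~\ref{lem:isotropic subspace} and \ref{lem:Lagrangian subspace}, and recovery of the flag from $F_k^\perp$. For pseudo-polarizations with $q=2l\ge 4$ the space of flags is a single $\SO_{2n}$-orbit, and your plan is as complete as the paper's type B proof.

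\textbf{The gap: the case $q=0$.} The issue you raise about the two families of maximal isotropic subspaces is real, but it cannot be ``checked componentwise''. When $q=0$ it changes the answer, so the stated isomorphism with $\mathbf{W}_D\times_{\mathrm{pt}}\mathbf{W}'_D$ is false and no version of the argument proves it. What matters is $q=0$, not very-evenness of $\mathbf{O}_e$ or $\mathbf{O}_R$.

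If $q=0$ then $l=0$, so $V''=0$ and $\mathbf{W}'_D=\mathrm{pt}$. Moreover $F_k=F_k^\perp$ is Lagrangian, and $\SO_{2n}/P_D$ contains only the flags whose Lagrangian lies in one fixed family. Since $E\subset F_k^\perp$ and $\dim E+\tfrac12\dim V=n$, $V'$ plays the role of $E^\perp/E$. Two such Lagrangians lie in the same family if and only if their images $W=F_k^\perp/E$ do.

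A concrete counterexample:
\begin{itemize}
\item In $\mathfrak{so}_8$ let $e$ lie in the minimal orbit $[2,2,1^4]$. Then $\mathbf{d}_{i_1}=[2,2,2,2]$, and $[2^4]^{I}$ is a minimal Richardson orbit, polarized by the stabilizer of a family-I Lagrangian (type $(4;0)$).
\item The fiber is $\{L \text{ of family I}: e(\CC^8)\subset L\}$. With $U=\mathrm{im}\,e$, these are the Lagrangians of $U^\perp/U\cong\CC^4$ in one family, so the fiber is $\cong\mathbb{P}^1$.
\item The recipe instead gives $n_1=4$ and $\mathbf{W}_D=\mathrm{OG}(2,4)\cong\mathbb{P}^1\sqcup\mathbb{P}^1$.
\item Triality confirms this: it permutes $[2^4]^{I}$, $[2^4]^{II}$, $[3,1^5]$, and for $[3,1^5]$ the recipe itself gives $\mathbf{W}'_D=\mathrm{IG}(1,2)\cong\mathbb{P}^1$.
\end{itemize}
Similarly, for $e\in[3,3,1,1]$ with its $(1,3;0)$ polarization, the recipe predicts $\mathrm{OG}(1,2)\times\mathrm{OG}(1,2)$, i.e.\ $4$ points. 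The actual fiber is a single $G^e$-orbit with $2$ points.

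\textbf{Which component gets selected.} The polarization does not select a component of every $\mathrm{OG}(m_j,N_j)$ with $N_j$ even, as your proposal suggests. Only the family of the total $W$ is fixed. When $q=0$, $\sum_\alpha n_\alpha$ is even, so the last step of the tower in Lemma~\ref{lem:isotropic subspace} is an $\mathrm{OG}(m,2m)$-fibration, and only that last fiber is cut to one of its two components. In the $[3,3,1,1]$ example, both points of $\mathbf{W}_1=\mathrm{OG}(1,2)$ survive.

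\textbf{What to prove instead.} For $q=0$, $\mu_{P_D}^{-1}(e)^{\mathrm{red}}$ is the open and closed half of $\mathbf{W}_D$ lying in the family determined by $P_D$. This is still smooth, pure dimensional, and an iterated orthogonal Grassmannian fibration, because a component of $\mathrm{OG}(m,2m)$ is isomorphic to $\mathrm{OG}(m-1,2m-1)$. But it has half as many components as $\mathbf{W}_D$.

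The paper's ``similar as type B'' overlooks this too. The duality results are unaffected, since they only involve types B and C.
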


\begin{example}\label{exm: singular fiber}
    In this example we refer to 1.3.8 of \cite{Yun17}. Consider an algebraic group of adjoint type and the subregular nilpotent orbit $\bf{O}_{subreg}\subset \overline{\bf{O}}_{reg}$. It is the unique nilpotent orbit of codimension two in the nilpotent cone and it is a Richardson orbit. 

    Thus the minimal Richardson orbit of $\bf{O}^{subreg}$ is itself. However, if we consider 
    \[
    \mu_B: T^*(G/B)\rightarrow \overline{\bf{O}}_{reg} \supset \bf{O}_{subreg}
    \]
    then for any $e\in \bf{O}_{subreg}$, the Springer fiber $\mu_B^{-1}(e)$ is a configuration of $\mathbb{P}^1$'s which forms a Dynkin curve, and in particular, it is not smooth in all most all cases.

    Therefore, this example confirms the distinctive property of the minimal Richardson orbit.
\end{example}

\section{E-polynomials of Spaltenstein varieties and duality}\label{Sec: E poly}

In this section, we only consider special nilpotent orbits of types B and C since $G :=\SO_{2n+1}$ and  ${}^LG=\Sp_{2n}$ are Langlands dual to each other.

Denote by $\underline{\cN}_{B/C}^{sp}$ the set of special nilpotent orbits in type B/C. By Springer theorem, there is a bijection map 
    \begin{align} \label{Springer_dual}
        S: \underline{\cN}^{sp}_B \longrightarrow \underline{\cN}^{sp}_C,
    \end{align}
which is called the \emph{Springer dual}. For a special nilpotent orbit $\bf{O}_B$ of type B, we denote its Springer dual by ${}^S\bf{O}_B$. Moreover, the Springer dual is dimension-preserving, then combining with \cite[Proposition 3.1]{FRW24}, we have

\begin{proposition}\label{prop.Spr dual min Richardson}
    If $\bf{O}_{B,R}$ is a minimal Richardson orbit for $\bf{O}_B$. Then ${}^S\bf{O}_{B,R}$ is a minimal Richardson orbit for ${}^S\bf{O}_B$.
\end{proposition}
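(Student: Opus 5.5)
We need to show that ${}^S\mathbf{O}_{B,R}$ is Richardson, that its closure contains ${}^S\mathbf{O}_B$, and that it is minimal among Richardson orbits with this property. The plan is to use three properties of the Springer dual map $S$ of \eqref{Springer_dual}:
\begin{itemize}
\item[(a)] $S$ is an order-preserving bijection of posets $\underline{\cN}^{sp}_B \to \underline{\cN}^{sp}_C$ for the closure order \eqref{partial-order}, and so is its inverse;
\item[(b)] $S$ preserves dimension;
\item[(c)] by \cite[Proposition 3.1]{FRW24}, $S$ maps Richardson orbits of type B bijectively onto Richardson orbits of type C, sending a polarization to a Langlands dual one.
\end{itemize}
Property (a) is classical, since Springer duality is realized through the Barbasch--Vogan/Lusztig duality, which is order-reversing, composed with an order-reversing map. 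Since Richardson orbits are special, (c) lets us apply $S$ to $\mathbf{O}_{B,R}$, and ${}^S\mathbf{O}_{B,R}$ is Richardson.

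\textbf{Containment.} Since $\mathbf{O}_B \le \mathbf{O}_{B,R}$ and both orbits are special, (a) gives ${}^S\mathbf{O}_B \le {}^S\mathbf{O}_{B,R}$. Hence $\overline{{}^S\mathbf{O}_{B,R}} \supset {}^S\mathbf{O}_B$.

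\textbf{Minimality.} Suppose $\mathbf{O}'_C$ is a Richardson orbit with ${}^S\mathbf{O}_B \le \mathbf{O}'_C \le {}^S\mathbf{O}_{B,R}$. By (c), $\mathbf{O}'_C = {}^S\mathbf{O}'_B$ for some Richardson orbit $\mathbf{O}'_B$ of type B. Applying $S^{-1}$ and (a) gives $\mathbf{O}_B \le \mathbf{O}'_B \le \mathbf{O}_{B,R}$. Minimality of $\mathbf{O}_{B,R}$ then forces $\mathbf{O}'_B = \mathbf{O}_{B,R}$, and hence $\mathbf{O}'_C = {}^S\mathbf{O}_{B,R}$. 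Dimension preservation (b) is a useful cross-check: a strict inequality of orbits forces a strict drop in dimension on both sides.

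\textbf{Main obstacle.} The delicate step is (a), specifically that $S^{-1}$ is also monotone. If citing this is not acceptable, I would verify it combinatorially. On partitions, Springer duality for special orbits is given by explicit operations: take the partition minus one in its last part, then C-collapse; in the other direction, add one to the first part, then B-collapse (or the version with transposes and specialization). Each of these operations (adding or removing a box at a fixed place, collapse, special hull) is monotone for dominance order. What needs care is that the composite, restricted to special partitions, has a monotone inverse given by the reverse operations. Alternatively, I could bypass (a) and argue directly with the explicit lists of minimal Richardson orbits in Propositions~\ref{prop.mim_R_B} and~\ref{prop.mim_R_C}. One would check that the duality matches the block sets $I^B_e$ and $I^C_{{}^Se}$, and that $S(\mathbf{d}^B_{i_a}) = \mathbf{d}^C_{i_a}$. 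That route is heavier, so I would use it only as a fallback.
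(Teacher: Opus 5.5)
Your argument is correct, and it is more complete than the paper's own justification. The paper gives only one sentence: Springer duality preserves dimension, and combined with \cite[Proposition 3.1]{FRW24} (Springer duals of Richardson orbits are Richardson, with Langlands dual polarizations) this is said to yield the proposition. Dimension preservation alone does not show that ${}^S\mathbf{O}_B$ lies in the closure of ${}^S\mathbf{O}_{B,R}$. Nor does it rule out a Richardson orbit strictly between them. The ingredient that actually transports containment and minimality is your property (a): $S$ is an isomorphism of posets on special orbits in both directions, which the paper uses only tacitly.

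So your route differs from the paper's in its key input. You make the order-isomorphism explicit, and you rightly treat dimension as a mere consistency check. You also make explicit the surjectivity of $S$ onto type~C Richardson orbits needed in the minimality step. This follows because Richardson orbits correspond to $j_{W_L}^{W}(\mathrm{sgn})$, and Langlands dual Levi subgroups have the same Weyl group inside $W(B_n)=W(C_n)$.

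The obstacle you flag is milder than you suggest. The maps $\mathbf{d}\mapsto(\mathbf{d}^-)_C$ and $\mathbf{f}\mapsto(\mathbf{f}^+)_B$ are each monotone for dominance on all partitions. Lowering the last part or raising the first part by one preserves dominance. Collapse is monotone because $\mathbf{d}_C$ is the largest C-partition dominated by $\mathbf{d}$. So once these two maps are known to be mutually inverse on special partitions, monotonicity of both $S$ and $S^{-1}$ is immediate, and the fallback through the block sets $I^B_e$ and $I^C_e$ is unnecessary.
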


In terms of partitions, let $\bf{d}_B = [\bf{T}_1, \bf{T}_2, \ldots, \bf{T}_m]$ be the partition of a special orbit $\mathbf{O}_B$. Then, the partition of its Springer dual $\mathbf{O}_C = {}^S\mathbf{O}_B$ is given by
\begin{align}\label{dual_partition}
	\bf{d}_C = [\tilde{\bf{T}}_1, \tilde{\bf{T}}_2, \ldots, \tilde{\bf{T}}_m],
\end{align}
where $\tilde{\bf{T}}_i = \bf{T}_i$ if $\bf{T}_i$ is of type B1, and $\tilde{\bf{T}}_i = \bf{T}_i'$ if $\bf{T}_i$ is of type B2 or B3.

Let $\mathbf{O}_B$ and $\mathbf{O}_C$ be Springer dual special orbits of types B and C. Let $\mathbf{O}_{B,R}$ and $\mathbf{O}_{C,R}$ be Springer dual minimal Richardson orbits. Taking Langlands dual polarizations $P_B$ and $P_C$, i.e., the Levi types~\eqref{levi-type} have the forms:
\[
	\mathfrak{l}_B \cong \mathfrak{gl}_{p_1} \oplus \mathfrak{gl}_{p_2} \oplus \cdots \oplus \mathfrak{gl}_{p_k} \oplus \mathfrak{so}_{2l+1}, \quad \mathfrak{l}_C \cong \mathfrak{gl}_{p_1} \oplus \mathfrak{gl}_{p_2} \oplus \cdots \oplus \mathfrak{gl}_{p_k} \oplus \mathfrak{sp}_{2l}.
\]

By \cite[Proposition 3.1]{FRW24}, we have 
\begin{align*}
    \xymatrix{ 
     & T^*(\SO_{2n+1}/P_{B})  \ar[d]_{\mu_{P_B}} 
     & \stackrel{\text{Langlands dual}}{\leftrightsquigarrow} 
     & T^*(\Sp_{2n} / P_{C}) \ar[d]^{\mu_{P_C}} \\     
     \mathbf{O}_{B} \ar@{^{(}->}[r] 
     & \overline{\mathbf{O}}_{B,R} 
     & \stackrel{\text{Springer dual}}{\leftrightsquigarrow} 
     & \overline{\mathbf{O}}_{C,R} 
     & \mathbf{O}_{C} \ar@{_{(}->}[l]  
    }
\end{align*}
Notice that, by Theorem~\ref{Thm:Spal_fiber B} and \ref{Thm:Spal_fiber C}, the Spaltenstein varieties $\mu_{P_B}^{-1}(e)^{\rm red}$ and $\mu_{P_C}^{-1}(e')^{\rm red}$ are smooth projective varieties. The Hodge-Deligne polynomial of a smooth variety $Z$ is defined as
\begin{align*}
	{\rm E}(Z; u, v)=\sum_{p, q} \sum_{k \geq 0}(-1)^{k} h^{p, q}\left(H_{c}^{k}(Z, \mathbb{C})\right) u^{p} v^{q},
\end{align*}
where $h^{p, q}\left(H_{c}^{k}(Z, \mathbb{C})\right)$ is the dimension of $(p, q)$--th Hodge-Deligne component in the $k$--th cohomology group with compact supports. Moreover, if $f: Z \rightarrow Y$ is a Zariski locally trivial fibration with fiber $F$ over a closed point, then from the exact sequence of mixed Hodge structures, we have ${\rm E}(Z)={\rm E}(Y) \times {\rm E}(F)$.

\begin{thm}\label{thm.duality}
	Under the above condition. Let $e \in \mathbf{O}_{B}$ and $e' \in \mathbf{O}_{C} $. Then, the connected components of $\mu_{P_B}^{-1}(e)^{\rm red}$ and $\mu_{P_C}^{-1}(e')^{\rm red}$ share the same E-polynomial. Furthermore, let $\#\mu_{P_B}$ be the number of connected component of $\mu_{P_B}^{-1}(e)^{\rm red}$, and similar for $\#\mu_{P_C}$. Then we have the following seesaw property
	\begin{align*}
        \#\mu_{P_B} \cdot \#\mu_{P_C} = \#\bar{A}(\mathbf{O}_{B}) = \#\bar{A}(\mathbf{O}_{C}).
    \end{align*}
	Here, $\bar{A}(\mathbf{O})$ is the Lusztig's quotient group of the component group $A(\mathbf{O})$.  
\end{thm}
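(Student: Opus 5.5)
The plan is to reduce everything to the explicit descriptions in Theorems~\ref{Thm:Spal_fiber B} and \ref{Thm:Spal_fiber C}, and then compare the combinatorial data on the two sides via the Springer dual partition \eqref{dual_partition}.

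By Theorem~\ref{Thm:Spal_fiber B}, $\mu_{P_B}^{-1}(e)^{\rm red}\cong \mathbf{W}_B\times\mathbf{W}'_B$. Lemma~\ref{lem:isotropic subspace} shows that $\mathbf{W}_B$ is an iterated fibration with fibers $\mathrm{OG}(m_j,N_j)$, and Lemma~\ref{lem:Lagrangian subspace} shows that $\mathbf{W}'_B$ is a product of $\mathrm{IG}(m'_i,n'_i)$. The same holds on the $C$ side. The fibrations in Lemma~\ref{lem:isotropic subspace} are Zariski locally trivial, since they are associated bundles for orthogonal groups acting on $W_1^\perp/W_1\oplus V_2$. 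The connected components of an iterated fibration with base a point can therefore be counted stage by stage. A stage $\mathrm{OG}(m,N)$ is disconnected exactly when $N=2m$ is even, in which case it has two components. The multiplicative property of $E$ then gives that each component has $E$-polynomial equal to the product of the $E$-polynomials of the connected fiber pieces, that is, $\mathrm{OG}(m,2m+1)$, one component of $\mathrm{OG}(m,2m)$, or $\mathrm{IG}(m,2m)$. I will use the classical isomorphisms: a component of $\mathrm{OG}(m,2m)$ is isomorphic to $\mathrm{OG}(m-1,2m-1)$, and $\mathrm{OG}(m,2m+1)$ and $\mathrm{IG}(m,2m)$ have the same $E$-polynomial, namely $\prod_{i=1}^m(1+(uv)^i)$, because both are $G/P$ for Langlands dual groups with the same Weyl group and dual parabolics.

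Next I match the data. I write $\mathbf{d}_B=[\mathbf{T}_1,\dots,\mathbf{T}_m]$ and $\mathbf{d}_C=[\tilde{\mathbf{T}}_1,\dots,\tilde{\mathbf{T}}_m]$, and use the Langlands dual polarizations, which have the same $p_i$ and $q=2l+1$ versus $2l$. I track block by block which parts are distinguished odd parts on the $B$ side (beyond the odd region) and distinguished even parts on the $C$ side (inside the odd region), and likewise for the symplectic $\mathbf{W}'$ data. The expected outcome is that the multiset of stages $\{(m_j,N_j)\}\cup\{(m'_i,n'_i)\}$ on the $B$ side agrees with that on the $C$ side after the substitutions $\mathrm{OG}(m,2m+1)\leftrightarrow\mathrm{IG}(m,2m)$ and $\mathrm{OG}(m,2m)^{\circ}\leftrightarrow \mathrm{OG}(m-1,2m-1)$. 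The one difference is which stages have even $N_j$, and these are the disconnected stages. This gives part (2), equality of $E$-polynomials of components.

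For the seesaw identity, I count the disconnected stages. Let $a_B$ be the number of $j$ with $N_j$ even in $\mathbf{W}_B$, and define $a_C$ similarly. Then $\#\mu_{P_B}=2^{a_B}$ and $\#\mu_{P_C}=2^{a_C}$. By Lusztig's description (as used in \cite{FRW24}), $\#\bar A(\mathbf{O}_B)=2^{t}$, where $t$ counts suitable odd/even parts, for instance the number of blocks of type B2 and B3 with $k\ge0$ minus correction terms. I would show $a_B+a_C=t$ by going through the blocks: each B2 block contributes a jump in the parity of $\sum n_i$ on exactly one side. The equality $\#\bar A(\mathbf{O}_B)=\#\bar A(\mathbf{O}_C)$ is Lusztig's theorem for Springer dual special orbits.

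The main obstacle is the bookkeeping in the matching step. One must track the effect of the B/C-collapse in $\mathbf{d}^B_{i_a}$ (the extra odd indices $2l+2,\dots,2l'+1$ in the proof of Theorem~\ref{Thm:Spal_fiber B}) and verify that the parity jumps of the partial sums $\sum_{i\le j}n_i$ on the two sides exactly partition the generators of $\bar A$. I would first check small cases, a single B2 block and a B3 block, and then argue by induction on the number of blocks, using that blocks before and after $\mathbf{T}_{i_a}$ are modified uniformly ($''$ versus $'$).
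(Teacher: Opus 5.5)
\textbf{Verdict: genuine gap.} Your outline follows the paper's route: the product decomposition of Theorems~\ref{Thm:Spal_fiber B} and~\ref{Thm:Spal_fiber C}, the $\mathrm{OG}/\mathrm{IG}$ $E$-polynomial identities, comparing multiplicities through \eqref{dual_partition}, and counting disconnected $\mathrm{OG}(m,2m)$ stages against $\bar A$. But the step you call the ``expected outcome'' is the whole content of the theorem. Moreover, the data you propose to track point to the wrong comparison. Distinguished odd parts of $\mathbf{d}_B$ in the even region and distinguished even parts of $\mathbf{d}_C$ in the odd region are the two \emph{orthogonal} factors $\mathbf{W}_B$ and $\mathbf{W}_C$, and these do not correspond. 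Take $\mathbf{d}_B=[3,2,2,1^4]$, with blocks $[3,2,2,1],[1,1],[1]$. Take its minimal Richardson orbit $[3,2^4]$ attached to $\mathbf{T}_1$, with pseudo-polarization of Levi type $\mathfrak{gl}_5\times\mathfrak{so}_1$. Dually, $\mathbf{d}_C=[2^4,1^2]$ and $\mathbf{O}_{C,R}=[2^5]$ with Levi $\mathfrak{gl}_5$. Then $\mathbf{W}_B=\mathrm{OG}(2,4)\cong\mathbb{P}^1\sqcup\mathbb{P}^1$ and $\mathbf{W}'_B=\mathrm{pt}$, while $\mathbf{W}_C=\mathrm{pt}$ and $\mathbf{W}'_C=\mathrm{IG}(1,2)\cong\mathbb{P}^1$. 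So the correspondence is crosswise: $\mathbf{W}_B^\circ\leftrightarrow\mathbf{W}'_C$ and $\mathbf{W}'_B\leftrightarrow\mathbf{W}_C^\circ$, which is how the paper argues.

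The missing mechanism is as follows. Under \eqref{dual_partition}, occurrences of an odd value inside B1 blocks stay odd, while B2/B3 endpoints become the even parts $\alpha_1-1$, $\alpha_2+1$, $\alpha-1$. So an odd value of multiplicity $n_j$ in the even region of $\mathbf{d}_B$ has multiplicity $n'_j\in\{n_j,n_j-1,n_j-2\}$ in the even region of $\mathbf{d}_C$. Which value occurs depends on how many of its extreme occurrences are block endpoints. The parity case analysis on $n_j$ and on $n_1+\dots+n_{j-1}$ then shows $n'_j=N_j-1$ when $N_j$ is odd and $n'_j=N_j-2$ when $N_j$ is even. Hence ${\rm E}(\mathrm{OG}(m_j,N_j)^\circ)={\rm E}(\mathrm{IG}(n'_j/2,n'_j))$ stage by stage, and so ${\rm E}(\mathbf{W}_B^\circ)={\rm E}(\mathbf{W}'_C)$. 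The same argument, with the converted endpoints raising the even multiplicities on the $C$ side, gives ${\rm E}(\mathbf{W}'_B)={\rm E}(\mathbf{W}_C^\circ)$.

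For the seesaw identity you must also pin down $t$ as the number of type B2 blocks \cite{FRW24,WWW24}, rather than a count ``with correction terms''. You then need to show that each B2 block yields exactly one disconnected stage. A block lying entirely in positions $\le 2l+1$ yields one in $\mathbf{W}_C$. Any other B2 block yields one in $\mathbf{W}_B$, through an even partial sum $n_1+\dots+n_j$. As written, your proposal defers exactly these identifications, so neither part of the theorem is actually established.
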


\begin{proof}
	Let $\bf{d}_B=[d_1, d_2, \ldots, d_r]=[\bf{T}_1, \bf{T}_2, \ldots, \bf{T}_m]$, and suppose the induced partition from the pseudo-polarization $P_B$ is $\bf{d}=[\tilde {d}_1 \equiv \tilde {d}_2 \equiv \ldots \equiv \tilde {d}_{2l+1} \equiv 1, \tilde {d}_{2l+2} \equiv \ldots \equiv \tilde {d}_r \equiv 0]$. Let $n_1,\ldots, n_k$ be the multiplicities of the distinguished odd parts $d_{i_1}, \ldots, d_{i_k}$ in $\{d_{2l+2}, \ldots, d_r\}$, i.e.,
	\[
		[\ldots, d_{i_1} = d_{i_1+1} = \ldots = d_{i_1+n_1-1}, \ldots, d_{i_k} = d_{i_k+1} = \ldots = d_{i_k+n_k-1}, \ldots].
	\]
	Let $\bf{d}_C=[d'_1, d'_2, \ldots, d'_r]$ (here we allow $d'_r$ to be zero) be the partition of $\mathbf{O}_C = {}^S\mathbf{O}_B$. Let $n'_1,\ldots, n'_{k}$ be the multiplicities of the distinguished odd parts in $\{d'_{2l+1}, \ldots, d'_r \}$. By the relation~\eqref{dual_partition}, we have 
	\begin{enumerate}
		\item $n'_j = n_j$ if $n_j$ is even and $d_{i_j}$, $d_{i_j+n_j-1}$ are in blocks of type B1. 
		\item $n'_j = n_j-2$ if $n_j$ is even but $d_{i_j}$, $d_{i_j+n_j-1}$ are in different blocks of type B2 or B3.
		\item $n'_j = n_j-1$ if $n_j$ is odd, note that in this case, either $d_{i_j}$ or $d_{i_j+n_j-1}$ is in the block of type B2 or B3.
	\end{enumerate}

	By Theorem~\ref{Thm:Spal_fiber B} and \ref{Thm:Spal_fiber C}, we have
	\[
		\mu_{P_B}^{-1}(e)^{\rm red} \cong \mathbf{W}_B \times_{\mathrm{pt}} \mathbf{W}'_B, \quad \mu_{P_C}^{-1}(e')^{\rm red} \cong \mathbf{W}_C \times_{\mathrm{pt}} \mathbf{W}'_C.
	\]
	Let $\mathbf{W}^\circ$ denote a connected component of $\mathbf{W}$. It suffices to show that
	\[
		{\rm E}(\mathbf{W}^\circ_B) = {\rm E}(\mathbf{W}'_C), \quad {\rm E}(\mathbf{W}'_B) = {\rm E}(\mathbf{W}^\circ_C).
	\]
	From \cite[Proposition 3.10]{FRW24}, we know 
	\begin{align*}
		{\rm E}(\operatorname{OG}(n,2n+1)) = {\rm E}(\operatorname{IG}(n,2n)) &= \prod_{j=1}^n (q^{j}+1),\\  {\rm E}(\operatorname{OG}(n,2n)) = 2{\rm E}(\operatorname{IG}(n-1,2n-2)) &= 2\prod_{j=1}^{n-1}(q^j+1),
	\end{align*}
	where $q=uv$.

    Recall from Lemma~\ref{lem:isotropic subspace}, $\bf{W}_B \rightarrow \ldots \rightarrow \bf{W}_2 \rightarrow \bf{W}_1$, where $\bf{W}_1 \cong \operatorname{OG}(\lfloor \frac{n_1}{2} \rfloor, n_1)$ and $\bf{W}_2 \rightarrow \bf{W}_1$ is a 
    \[
        \operatorname{OG}(\lfloor \tfrac{n_1+n_2}{2} \rfloor-\lfloor \tfrac{n_1}{2} \rfloor, n_1+n_2-2\lfloor \tfrac{n_1+n_2}{2} \rfloor)
    \]
    fibration.

	Note that $d_{i_1}$ is in the block of type B2. If $n_1$ is odd, then $n_1'=n_1-1$, from the above equation, we know ${\rm E}(\operatorname{OG}( \frac{n_1-1}{2} ,n_1))={\rm E}(\operatorname{IG}( \frac{n_1'}{2} ,n_1'))$. If $n_1$ is even, we arrive at case (2) mentioned above, i.e., $n_1'=n_1-2$. Then, we have ${\rm E}(\operatorname{OG}( \frac{n_1}{2} ,n_1))=2{\rm E}(\operatorname{IG}( \frac{n_1'}{2} ,n_1'))$, here the multiplicity 2 comes from the fact that $\operatorname{OG}(\frac{n_1}{2},n_1)$ has two connected components. 
	
	If both $n_1$ and $n_2$ are odd, then $d_{i_2+n_2-1}$ is in the block of type B2 or B3, and $n_2'=n_2-1$. We have ${\rm E}(\operatorname{OG}( \lfloor\frac{n_1+n_2}{2}\rfloor - \lfloor \frac{n_1}{2} \rfloor, n_1+n_2-2\lfloor \frac{n_1}{2} \rfloor))=2{\rm E}(\operatorname{IG}( \frac{n_2'}{2} ,n_2'))$. 
	
	If both $n_1$ and $n_2$ are even, we arrive at case (2), i.e., $n_2'=n_2-2$. Then, we have ${\rm E}(\operatorname{OG}( \lfloor \frac{n_1+n_2}{2} \rfloor - \lfloor \frac{n_1}{2} \rfloor ,n_1+n_2-2\lfloor \frac{n_1}{2} \rfloor))=2{\rm E}(\operatorname{IG}( \frac{n_2'}{2} ,n_2'))$.

    If $n_1$ is odd and $n_2$ is even, then $d_{i_2}, \ldots, d_{i_2+n_2-1}$ are all in blocks of type B1. In this case, $n_2'=n_2$, and we have ${\rm E}(\operatorname{OG}( \lfloor\frac{n_1+n_2}{2}\rfloor - \lfloor \frac{n_1}{2} \rfloor, n_1+n_2-2\lfloor \frac{n_1}{2} \rfloor))={\rm E}(\operatorname{IG}( \frac{n_2'}{2} ,n_2'))$.

    If $n_1$ is even and $n_2$ is odd, we arrive at case (3) and $d_{i_2}$ is in the block of type B2, $n_2'=n_2-1$. We have ${\rm E}(\operatorname{OG}( \lfloor\frac{n_1+n_2}{2}\rfloor - \lfloor \frac{n_1}{2} \rfloor, n_1+n_2-2\lfloor \frac{n_1}{2} \rfloor))={\rm E}(\operatorname{IG}( \frac{n_2'}{2} ,n_2'))$.
    
    Continuing this process, using the description of $\bf{W}_B$ from Lemma~\ref{lem:isotropic subspace} and the property of E-polynomial, we conclude that ${\rm E}(\mathbf{W}^\circ_B) = {\rm E}(\mathbf{W}'_C)$. Similarly, we can prove ${\rm E}(\mathbf{W}'_B) = {\rm E}(\mathbf{W}^\circ_C)$.

	Finally, from \cite[Proposition 2.7]{FRW24} or \cite[Lemma 2.24]{WWW24}, we have $ \bar{A}(\mathbf{O}_B) \cong \bar{A}({}^S\mathbf{O}_B) = \mathbb{Z}_2^{q}$. Here $q$ equals the number of blocks of type B2 in $\mathbf{d}_B$. On the other hand, the different connected components of $\mu_{P_B}^{-1}(e)^{\rm red}$ and $\mu_{P_C}^{-1}(e')^{\rm red}$ come from the $\operatorname{OG}$ fibration. 

    Since $\bf{O}_B$ is special, there is no type B1* block in $\bf{d}_B$. From the proof of Lemma~\ref{lem:isotropic subspace}, we can analyze each type B2 block and it is easy to show each type B2 block, before $d_{2l+1}$, contributes two connected components of $\bf{W}_C$. To find the number of connected components of $\bf{W}_B$, first note that $\sum_{i=1}^k n_i$ is even, then we can divide them into pieces $\{n_1, \ldots, n_{i_1}\}, \{n_{i_1+1}, \ldots, n_{i_2}\}, \ldots, \{n_{i_{p-1}+1}, \ldots, n_k\}$, such that 
	\[
		\sum_{j=i_{s-1}+1}^{i_s} n_j \equiv 0, \quad \sum_{j=i_{s-1}+1}^{a} n_j \equiv 1 \quad \text{for any }  i_s < a < i_{s+1}.
	\]
	Here we set $i_0=0$. From the proof of Lemma~\ref{lem:isotropic subspace}, we know each piece contributes connected components of $\bf{W}_B$ and from the above analysis, each piece contributes two connected components. Note that the number of pieces equals the number of type B2 blocks after $d_{2l+1}$ plus one in $\bf{d}_B$. Hence, we have $\#\mu_{P_B} \cdot \#\mu_{P_C} = \#\bar{A}(\mathbf{O}_{B}) = \#\bar{A}(\mathbf{O}_{C})$.
\end{proof}

\bibliographystyle{alpha}
	
\bibliography{ref}

\end{document}